\newtheorem{theorem}{Theorem}[section]
\newtheorem{lemma}[theorem]{Lemma}
\numberwithin{equation}{section}
\newcommand{\ds}{\displaystyle}
\title{Zero distribution of orthogonal polynomials on a $q$-lattice\thanks{%
Work supported by EOS project 30889451 and FWO project G.0C9819N.}
}
\author{Walter Van Assche\footnote{Department of Mathematics, KU Leuven, Celestijnenlaan 200B box 2400, BE-3001 Leuven, Belgium.}
        \and Quinten Van Baelen}
\begin{document}

\maketitle

\begin{abstract}
We give the asymptotic behavior of the zeros of orthogonal polynomials, after appropriate scaling, for which the orthogonality measure is supported on the $q$-lattice $\{q^k, k=0,1,2,3,\ldots\}$, where $0 < q < 1$. The asymptotic distribution of the zeros is given by the radial part of the equilibrium measure of an extremal problem in logarithmic potential theory for circular symmetric measures with a constraint imposed by the $q$-lattice.
\end{abstract}

\section{Introduction}

Let $\{ p_n,\ n=0,1,2,3,\ldots\}$ be orthogonal polynomials satisfying a discrete orthogonality relation on the $q$-lattice
$\{ q^k, k=0,1,2,\ldots\}$:
\[   \sum_{k=0}^\infty p_n(q^k) p_m(q^k) q^k w(q^k) = \delta_{n,m},  \]
where $0 < q < 1$ and $w$ is a weight function on the $q$-lattice. In terms of the Jackson $q$-integral 
\[   \int_0^1 f(x)\, d_qx =  (1-q) \sum_{k=0}^\infty  f(q^k) q^k, \]
the orthogonality is
\[    \int_0^1  p_n(x)p_m(x) w(x) \, d_qx = (1-q) \delta_{n,m}.  \]
Typical examples of families of orthogonal polynomials on a $q$-lattice are the little $q$-Jacobi polynomials \cite[\S 14.12]{KLS},
the little $q$-Laguerre polynomials \cite[\S 14.20]{KLS},
and the $q$-Bessel polynomials \cite[\S 14.22]{KLS}, which we will consider in detail in Section \ref{sec:ex}.
The NIST handbook \cite[\S 18.27]{NIST} contains a section on the $q$-Hahn class but we recommend the book \cite{KLS} for a more
detailed collection of these discrete orthogonal polynomials on a $q$-lattice. 

In this paper we are interested in the asymptotic distribution of the zeros of such orthogonal polynomials.
It is well known that the zeros $x_{1,n} < x_{2,n} < \cdots < x_{n,n}$ are all in $[0,1]$, which is the smallest interval containing  
$\{q^k, k=0,1,2,\ldots\}$, and for discrete orthogonal polynomials there can be at most one zero between two consecutive points of the
discrete set, i.e., at most one zero of $p_n$ between $q^{k+1}$ and $q^k$ for every $k$ and every $n$, see, e.g., \cite[Thm. 6.1.1]{Szego}
or \cite[Thm. 2.4]{Freud}.
This means that if $q^{k+1} < a \leq q^k$, then there are at most $k+1 = \lceil \log a/\log q \rceil$ zeros of each $p_n$
in $[a,1]$. Hence most zeros are in $[0,a]$ for every $a >0$ and the zeros $\{x_{j,n}, 1 \leq j \leq n\}$ of $p_n$ accumulate
at $0$ in the sense that
\[  \lim_{n \to \infty} \frac{1}{n} \sum_{k=1}^n f(x_{k,n}) = f(0), \]
for every continuous function $f$ on $[0,1]$. 
This asymptotic distribution was also found in \cite{AlBuDeh} using the moments of the asymptotic zero distribution and the asymptotic behavior of the
coefficients in the three-term recurrence relation.
In order to find a more interesting distribution of the zeros, i.e., a limit distribution which
is not degenerate at $0$, we need some scaling of the zeros. Since the zeros $x_{j,n}$ accumulate at $0$ at least as fast as the points in the
lattice $\{q^n, n = 0,1,2,\ldots\}$, a proper scaling is to investigate the distribution of $\{x_{j,n}^{1/n}, 1 \leq j \leq n \}$ as $n \to \infty$.
This moves the zeros away from $0$ towards the endpoint $1$. We therefore will investigate the zeros of $p_n(x^n)$. This scaling and the
asymptotic distribution of the zeros of $p_n(x^n)$, where the polynomials are orthogonal on $[0,\infty)$ with a slowly decaying weight (such as the Stieltjes-Wigert polynomials \cite[\S 14.27]{KLS}), was first suggested and worked out by Kuijlaars in \cite{Kuijl}. Note that $x_{1,n}^{1/n},\ldots,
x_{n,n}^{1/n}$ are indeed zeros of $p_n(x^n)$, but this polynomial of degree $n^2$ has many more complex zeros
\[  x_{1,n}^{1/n} \omega_n^k, x_{2,n}^{1/n} \omega_n^k, \ldots, x_{n,n}^{1/n} \omega_n^k, \qquad 0 \leq k \leq n-1, \]
where $\omega_n = e^{2\pi i/n}$ is the primitive $n$th root of unity. 
\begin{figure}[ht]
\centering
\includegraphics[width=3in]{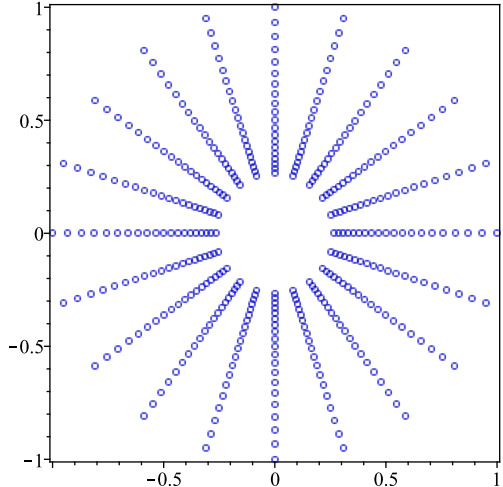}
\caption{The 400 zeros of $p_{20}(x^{20})$ for the little $q$-Jacobi polynomial with $q=1/4$, $a=b=1/2$.}
\label{fig:zeros}
\end{figure}

So our investigation of the asymptotic distribution of the zeros
of $p_n(x^n)$ will give a limiting measure in the complex plane with circular symmetry. Indeed, if we define the zero counting measure for
$p_n(x^n)$ by
\[   \mu_n = \frac{1}{n^2} \sum_{j=1}^n \sum_{k=0}^{n-1} \delta_{x_{j,n}^{1/n} \omega_n^k}, \]
where $\delta_a$ is the unit measure with mass 1 at the point $a$, then this gives a sequence of probability measures on the
closed unit disk $\overline{\mathbb{D}}= \{ z \in \mathbb{C}:\ |z| \leq 1 \}$ and we will show that it converges weakly to
a probability measure $\mu$ on  $\overline{\mathbb{D}}$ in the sense that for every continuous function $f$ on $\overline{\mathbb{D}}$
\[   \lim_{n \to \infty} \int f(x)\, d\mu_n(x) = \int f(x) \, d\mu(x).  \]
The circular symmetry gives
\[     \int f(x)\, d\mu(x) = \frac{1}{2\pi} \int_0^1 \int_0^{2\pi} f(re^{i\theta})\, d\theta d\nu(r),  \]
and the measure $\nu$ gives the radial part of the measure $\mu$ and is related to the asymptotic distribution of the positive real zeros
of $p_n(x^n)$: setting
\begin{equation}  \label{nu_n}
  \nu_n = \frac{1}{n} \sum_{j=1}^n \delta_{x_{j,n}^{1/n}}, 
\end{equation}
one has
\[   \lim_{n \to \infty} \int_0^1 g(x) \, d\nu_n(x) = \int_0^1 g(x)\, d\nu(x), \]
for every continuous function $g$ on $[0,1]$.

We will show that the limiting measure $\mu$ on the unit disk is given by the solution of an extremal problem in logarithmic potential
theory for circular symmetric measures on the unit disk satisfying a constraint. 
Such extremal problems with a constraint were first used by Rakhmanov to find the asymptotic distribution of the zeros
of discrete orthogonal polynomials on a uniform lattice (the discrete Chebyshev polynomials) in \cite{Rakh}. Later, Dragnev and Saff
extended these results to cover other discrete orthogonal polynomials \cite{DragSaff1}, in particular the Krawtchouk polynomials
\cite{DragSaff2}. Kuijlaars and Van Assche \cite{KuijlWVA} further extended this for discrete orthogonal polynomials on an unbounded set.
A nice survey of zeros of discrete orthogonal polynomials and equilibrium measures in logarithmic potential theory with constraints, 
can be found in \cite{KuijlRakh}.

\section{Main results}  \label{sec:main}
We will be using the norms
\[    \|f\|_{E} = \|f\|_{\infty,E} = \sup_{x \in E} |f(x)|, \qquad   \|f\|_{2,q} = \left( (1-q) \sum_{k=0}^\infty q^k |f(q^k)|^2 \right)^{1/2}.  \]
In order to prove our main results, we will use the following result \cite[Thm.~7.4]{KuijlWVA}
\begin{theorem} \label{thm:KVA}
Let $Q(x) = - \log w(x)$ be an admissible field $[0,\infty)$ and $\sigma$ an admissible constraint. Let $(E_n)_{n \in \mathbb{N}}$ be a sequence of subsets of $[0,\infty)$
satisfying the following conditions:
\begin{enumerate}
  \item For each $n$ and each compact set $A$ in $[0,\infty)$ the quantity $\sigma_n(A) = \frac1{n} \# (A \cap E_n)$ is finite.
  \item For every continuous function $f$ with compact support on $[0,\infty)$ one has
  \[   \lim_{n \to \infty} \int f(x)\, d\sigma_n(x) = \int f(x)\, d\sigma(x). \]  
   \item The set of points that are not normal for the sequence $(E_n)_{n \in \mathbb{N}}$ has logarithmic capacity zero.
\end{enumerate} 
Then the following hold:
\begin{itemize}
  \item If $T_n$ is a monic polynomial of degree $n$ minimizing $\|w^nP_n\|_{E_n}$ among all monic polynomials $P_n$ of degree $n$, then
  \[    \lim_{n \to \infty} \| w^nT_n\|_{E_n}^{1/n} = \exp(-w_Q^\sigma).  \]
   \item For every sequence of monic polynomials $(P_n)_{n \in \mathbb{N}}$ with $\deg P_n = n$, such that
   \[   \lim_{n \to \infty}  \|w^n P_n\|_{E_n}^{1/n} = \exp(-w_Q^\sigma), \]
   one has
   \[   \lim_{n \to \infty} \frac1{n} \sum_{k=1}^n \delta_{x_{k,n}} = \mu_Q^\sigma, \]
   where $\{x_{k,n}, 1 \leq k \leq n\}$ are the zeros of $P_n$.
\end{itemize}
\end{theorem}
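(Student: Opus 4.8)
The plan is to reduce the statement to the constrained weighted equilibrium problem whose solution is $\mu_Q^\sigma$. Writing $w = e^{-Q}$ and letting $\nu_n = \frac1n \sum_{k=1}^n \delta_{x_{k,n}}$ be the normalized zero counting measure of a monic polynomial $P_n$ of degree $n$, the factorization $|P_n(x)| = \exp(-n U^{\nu_n}(x))$ gives the identity
\[ \|w^n P_n\|_{E_n}^{1/n} = \exp\Bigl(-\inf_{x \in E_n}\bigl(Q(x) + U^{\nu_n}(x)\bigr)\Bigr), \qquad U^\mu(x) = \int \log\frac{1}{|x-t|}\, d\mu(t), \]
so minimizing the weighted norm is equivalent to maximizing $\inf_{E_n}(Q + U^{\nu_n})$ over all such $\nu_n$. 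I would first record the needed facts about the target measure: on the convex class $\{\mu : \mu \le \sigma,\ \mu([0,\infty)) = 1\}$ the weighted energy $I_Q(\mu) = \iint \log|x-t|^{-1}\, d\mu(x)\,d\mu(t) + 2\int Q\, d\mu$ is strictly convex and lower semicontinuous, so (using admissibility of $Q$ and $\sigma$ to ensure the class is nonempty and the relevant measures tight) there is a unique minimizer $\mu_Q^\sigma$, characterized by the variational inequalities $U^{\mu_Q^\sigma} + Q \ge w_Q^\sigma$ quasi-everywhere on $\mathrm{supp}(\sigma - \mu_Q^\sigma)$ and $U^{\mu_Q^\sigma} + Q \le w_Q^\sigma$ quasi-everywhere on $\mathrm{supp}(\mu_Q^\sigma)$, the constant being exactly $w_Q^\sigma$.

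For the first bullet I would prove two matching bounds. The upper bound on the minimal norm is constructive: since $\mu_Q^\sigma \le \sigma$ and $\sigma_n \to \sigma$ by conditions (1)--(2), one can place the $n$ zeros at points of $E_n$ whose counting measure approximates $\mu_Q^\sigma$, and then, using $U^{\mu_Q^\sigma} + Q \ge w_Q^\sigma$ together with control of the discretization error off the set of non-normal points, which has capacity zero by condition (3), conclude $\inf_{E_n}(Q + U^{\nu_n}) \ge w_Q^\sigma - o(1)$; this yields $\limsup_n \|w^n T_n\|_{E_n}^{1/n} \le \exp(-w_Q^\sigma)$. The reverse bound expresses that no monic polynomial can beat equilibrium. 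Here the discreteness of $E_n$ is decisive: if $Q + U^{\nu_n} \ge c_n$ on the thin set $E_n$, then comparing $U^{\nu_n}$ with potentials of measures dominated by $\sigma$ (via balayage onto the constraint and the principle of domination) shows that $\nu_n$ cannot be more concentrated than the density $\sigma$ allows without forcing $Q + U^{\nu_n}$ below $c_n$ somewhere on $E_n$; pushing this through gives $c_n \le w_Q^\sigma + o(1)$, hence $\liminf_n \|w^n T_n\|_{E_n}^{1/n} \ge \exp(-w_Q^\sigma)$.

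For the second bullet, assume $\|w^n P_n\|_{E_n}^{1/n} \to \exp(-w_Q^\sigma)$, equivalently $\inf_{E_n}(Q + U^{\nu_n}) \to w_Q^\sigma$. Admissibility of $Q$ keeps the zeros from escaping to infinity, so $\{\nu_n\}$ is tight and, along any subsequence, $\nu_n \to \nu$ weakly for some probability measure $\nu$. The discreteness of $E_n$ is inherited in the limit as the feasibility constraint $\nu \le \sigma$: if $\nu$ charged a compact set more than $\sigma$ does, then $\nu_n$ would eventually place more zeros there than $E_n$ has lattice points, making $U^{\nu_n}$ large at a nearby point of $E_n$ and contradicting the limiting value of the infimum. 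By lower semicontinuity of $I_Q$ and the computed value of the infimum, $\nu$ attains the constrained minimum, so $\nu = \mu_Q^\sigma$ by uniqueness; as the limit does not depend on the subsequence, the full sequence $\nu_n$ converges weakly to $\mu_Q^\sigma$.

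I expect the main obstacle to be the reverse bound in the first bullet and the derivation of $\nu \le \sigma$ in the second: both require converting the purely discrete information carried by $E_n$ into a continuous constraint on limiting measures. This is the technical core of the constrained-equilibrium method, where the hypothesis that non-normal points have capacity zero [condition (3)] and the balayage of measures onto the support of the constraint must be handled carefully, so that the discrete maximum principle on $E_n$ can be transferred to the potentials in the limit.
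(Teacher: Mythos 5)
First, an important point of reference: the paper does not prove this theorem at all --- it is imported verbatim from Kuijlaars and Van Assche \cite{KuijlWVA} (their Theorem 7.4), so the only meaningful comparison is with the proof in that reference. That proof does follow the general strategy you outline: reduce to the constrained weighted equilibrium problem, characterize $\mu_Q^\sigma$ by variational inequalities, prove matching upper and lower bounds for the extremal norm, and identify limits of zero counting measures by a uniqueness argument. At the level of strategy your proposal is the right one; at the level of execution it has genuine gaps at precisely the points you yourself flag as ``the technical core.''

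Two of these gaps are concrete errors rather than omissions. (i) In your upper bound for the first bullet you invoke $U(x;\mu_Q^\sigma)+Q(x)\geq w_Q^\sigma$ as if it held at all points of $E_n$, but this inequality holds only (quasi-everywhere) on $\mathrm{supp}(\sigma-\mu_Q^\sigma)$; on the saturated region, where $\mu_Q^\sigma$ coincides with the constraint $\sigma$, the opposite inequality $U(x;\mu_Q^\sigma)+Q(x)\leq w_Q^\sigma$ holds, possibly strictly. The discrete sup norm is nevertheless not spoiled there, but only because the discretized zeros can be placed essentially at the lattice points of the saturated region, making $|P_n|$ vanish (or be extremely small) at exactly the evaluation points where the continuum potential would violate the bound. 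Your sketch never addresses this, and without it the claimed estimate $\inf_{E_n}(Q+U^{\nu_n})\geq w_Q^\sigma-o(1)$ is false if read as a statement about potentials of measures. (ii) In the second bullet, the feasibility step is invalid: for an \emph{arbitrary} asymptotically extremal sequence $(P_n)$ there is no interlacing between zeros and lattice points, so ``more zeros than lattice points'' yields no contradiction by counting --- a monic polynomial may cluster as many zeros as it likes in a tiny interval while remaining small on the nearby points of $E_n$. The constraint $\nu\leq\sigma$ on subsequential limits, and the identification $\nu=\mu_Q^\sigma$, must instead be extracted potential-theoretically (principle of descent, principle of domination, and a unicity theorem for the constrained problem), which is what \cite{KuijlWVA} actually does. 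Relatedly, your final step ``by lower semicontinuity of $I_Q$ \dots\ $\nu$ attains the constrained minimum'' is a non sequitur: the hypothesis is an asymptotic statement about discrete sup norms, and no link between $\lim_n \inf_{E_n}(Q+U^{\nu_n})$ and the energy $I_Q(\nu)$ has been established; sup-norm asymptotics control potentials, not energies, and bridging that gap is exactly the missing technical content.
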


This theorem is formulated in \cite{KuijlWVA} for functions on $[0,\infty)$, but since our $q$-lattice is in $[0,1]$, we will only be using the theorem for functions on $[0,1]$.
In that case, a field $Q$ is admissible if $Q$ is lower semi-continuous on $[0,1]$ and if the set $E_0=\{ x \in [0,1] : Q(x)< \infty\}$ has positive capacity. There is no need for
a growth condition as $x \to \infty$. A constraint $\sigma$ is admissible on $[0,1]$ if the support of $\sigma$ is $[0,1]$, $\sigma(E_0) >1$, and $\sigma$
has finite logarithmic energy over compact sets of $(0,1)$, i.e., for every compact set $K \subset (0,1)$ one has
\[  \int_K\int_K \log \frac{1}{|x-y|} \, d\sigma(x) d\sigma(y) < \infty.  \]
Finally, a point $x \in [0,1]$ is called a normal point for the sequence $(E_n)_{n \in \mathbb{N}}$ if there exists a constant $\rho_x >0$, a neighborhood $V_x$ of $x$ and
a positive integer $N_x$, such that for all $n \geq N_x$ and for every pair of different points $x_1,x_2 \in E_n \cap V_x$ one has
\[    |x_1-x_2| \geq \frac{\rho_x}{n}.  \]
In Section \ref{sec:3} we will show that we need the constraint $\sigma$ on $[0,1]$ with density
\[    \sigma'(t) = \frac{-1}{\log q} \, \frac{1}{t}, \]
which satisfies the conditions for an admissible constraint on $(0,1)$. Observe that $\sigma([0,1]) = \infty$ but $\sigma([a,1])$ is finite
whenever $0 < a < 1$. 

Our first main result is the asymptotic behavior of the leading coefficient $\gamma_n$ and the
asymptotic behavior of the $n$-th roots $x_{1,n}^{1/n} < x_{2,n}^{1/n} < \cdots < x_{n,n}^{1/n}$ of the zeros of the orthonormal polynomials $p_n$
when the weight function does not generate an external field.

\begin{theorem}     \label{thm:2.1}
Let $p_n(x)=\gamma_n x^n+\cdots$ be the orthonormal polynomials on the exponential lattice $\{q^n, n=0,1,2,3,\ldots\}$ with weights
$w_k=w(q^k) >0$ such that
\begin{equation}   \label{wzero}
   \lim_{n \to \infty} \frac{1}{n^2} \log w(x^n) = 0,  
\end{equation}
uniformly on every closed interval of $(0,1]$.
Then 
\begin{equation}  \label{normq}
  \lim_{n \to \infty} \gamma_n^{1/n^2} = q^{-1/2}, 
\end{equation}
and the distribution of the roots $x_{1,n}^{1/n} < \cdots < x_{n,n}^{1/n}$ is given by the measure $\nu^\sigma$ with density
\[   (\nu^\sigma)'(t) = \frac{-1}{\log q} \frac{1}{t}, \qquad t \in [q,1]. \]
\end{theorem}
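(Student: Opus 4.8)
The plan is to realize $1/\gamma_n$ as an extremal quantity and to read off both assertions from the constrained equilibrium problem governed by Theorem~\ref{thm:KVA}, once the scaling $x\mapsto x^n$ has rendered the lattice constraint nondegenerate. Recall that the monic orthogonal polynomial $P_n=p_n/\gamma_n$ minimizes the discrete weighted $L^2$ norm,
\[ \frac{1}{\gamma_n^2}=\sum_{k=0}^\infty q^k w(q^k)\,|P_n(q^k)|^2=\min\Big\{\sum_{k=0}^\infty q^k w(q^k)\,|R(q^k)|^2: R\ \text{monic},\ \deg R=n\Big\}. \]
Setting $r_{j,n}=x_{j,n}^{1/n}$, the polynomial $p_n(x^n)$ has the circularly symmetric zero set $\{r_{j,n}\omega_n^\ell\}$, its normalized counting measure is $\mu_n$, and the radial measure $\nu_n$ of (\ref{nu_n}) is the counting measure of the monic factor $\Pi_n(x)=\prod_{j=1}^n(x-r_{j,n})$. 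Evaluating $p_n(x^n)$ on the scaled lattice $E_n=\{q^{k/n}:k\ge 0\}$ reproduces $p_n$ on $\{q^k\}$, since $(q^{k/n})^n=q^k$, while hypothesis (\ref{wzero}) ensures the external field disappears at the scale $n^2$.

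The first, routine, step is to identify the constraint and verify the hypotheses of Theorem~\ref{thm:KVA} for $E_n$. For $0<a<1$,
\[ \sigma_n([a,1])=\frac1n\#\{k:q^{k/n}\ge a\}=\frac1n\Big(\Big\lfloor \tfrac{n\log a}{\log q}\Big\rfloor+1\Big)\longrightarrow \frac{\log a}{\log q}=\int_a^1 \frac{-1}{\log q}\,\frac{dt}{t}, \]
so $\sigma_n\to\sigma$ on compact subsets of $(0,1]$, with $\sigma$ the admissible constraint of Section~\ref{sec:3}. Finiteness of $\sigma_n$ on such compacts is clear since $E_n\cap[a,1]$ is finite; normality holds because consecutive points of $E_n$ near a fixed $x_0\in(0,1]$ are separated by $q^{k/n}(1-q^{1/n})\sim x_0\,\tfrac{-\log q}{n}$, so the spacing exceeds $\rho_{x_0}/n$, while the only non-normal point $x_0=0$ has logarithmic capacity zero.

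The heart of the argument, and the step I expect to be the main obstacle, is to show that $p_n(x^n)$ (equivalently $\Pi_n$) is \emph{asymptotically extremal} for the appropriate constrained problem, so that the zero-distribution part of Theorem~\ref{thm:KVA} yields $\nu_n\to\nu^\sigma$. Two difficulties combine here. First, orthonormality provides extremality in the discrete $L^2$ norm rather than the sup norm; I would bridge this by the standard comparison of discrete $\ell^2$ and $\ell^\infty$ norms, using that each $[a,1]$ carries only $O(n)$ lattice points so the two norms share their $n$th root asymptotics. Second, and more seriously, the underlying equilibrium problem is genuinely planar: by the mean value property the logarithmic energy of a circularly symmetric measure with radial part $\nu$ equals $\iint \log\frac{1}{\max(r,s)}\,d\nu(r)\,d\nu(s)$, so the radial problem is governed by the kernel $\log\frac{1}{\max(r,s)}$ rather than by $\log\frac{1}{|r-s|}$. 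Reconciling this radial problem with the line-kernel equilibrium of Theorem~\ref{thm:KVA}—finding the one-dimensional reduction to which the theorem applies and for which $\Pi_n$ is asymptotically minimal—together with proving that the zeros genuinely saturate the constraint rather than leaving a free region, is where the real work lies; it is the minimality of $1/\gamma_n$ that drives the zeros up against the lattice.

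Finally I would solve the limiting problem explicitly. The bound $\nu_n\le\sigma_n$—at most one zero of $p_n$ per lattice gap, as recalled in the introduction—shows that any weak limit obeys $\nu\le\sigma$; since $\int_q^1 \frac{-1}{\log q}\frac{dt}{t}=1$, a probability measure dominated by $\sigma$ and supported in $[q,1]$ must equal $\sigma$ there. Hence it suffices to prove $\operatorname{supp}\nu\subseteq[q,1]$, and this follows from the variational conditions for the circularly symmetric equilibrium with trivial field: for the candidate $\nu^\sigma=\sigma|_{[q,1]}$ the radial potential $U(x)=\int\log\max(x,t)\,d\nu^\sigma(t)$ is constant and equal to $\tfrac12\log q$ on $[0,q]$ and strictly larger on $(q,1]$, so the effective potential $-U$ equals the constant $F=-\tfrac12\log q$ off $[q,1]$ and does not exceed $F$ on $[q,1]$, which are exactly the Frostman (in)equalities identifying $\mu_Q^\sigma=\nu^\sigma$. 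The same computation yields the extremal constant: the dominant lattice point is the first one below the support, $q^k$ with $k\approx n$, where $q^k|P_n(q^k)|^2\asymp q^{n^2}$, whence $\frac{1}{n^2}\log(1/\gamma_n^2)\to\log q$, that is (\ref{normq}), $\gamma_n^{1/n^2}\to q^{-1/2}$.
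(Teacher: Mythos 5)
Your overall strategy --- realizing $1/\gamma_n$ as a discrete extremal quantity, scaling by $n$-th roots, invoking Theorem~\ref{thm:KVA}, and solving the circular-symmetric constrained equilibrium problem explicitly (your last paragraph reproduces the computation of Section~\ref{sec:3.1}, including the correct Frostman verification that $\nu^\sigma=\sigma|_{[q,1]}$ is the constrained equilibrium measure) --- is the paper's strategy. But there is a genuine gap at the point where you apply Theorem~\ref{thm:KVA} to the full scaled lattice $E_n=\{q^{k/n}:k\ge 0\}$. Condition~1 of that theorem requires $\frac1n\#(A\cap E_n)<\infty$ for \emph{every} compact $A\subset[0,\infty)$, and this fails for any compact set containing $0$, since $E_n$ has infinitely many points accumulating there; moreover the constraint itself has $\sigma([0,1])=\infty$, so it is admissible only after restriction to $[\epsilon,1]$. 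Your capacity-zero observation about the non-normal point $0$ repairs only condition~3, not conditions~1 and~2. This is precisely why the paper works with the truncated sets $E_n(\epsilon)$ and restricted constraints $\sigma_{1/k}=\sigma|_{[1/k,1]}$, and the truncation is not free: the orthonormal polynomials are extremal for the $L^2$ norm on the \emph{full} lattice, so one must relate the full-lattice minimizer $P_n$ to the truncated-lattice minimizers (Lemma~\ref{lem:4.2}), run the extremal-norm argument at each truncation level, and then justify interchanging the limits $n\to\infty$ and $k\to\infty$, which works only because $0\notin\operatorname{supp}(\nu^\sigma)=[q,1]$ forces $w^{\sigma_{1/k}}=w^\sigma$ for all large $k$. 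This machinery is the bulk of Section~\ref{sec:4} and your proposal has no substitute for it.

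The second problem is that the step you yourself flag as ``where the real work lies'' --- showing that $p_n(x^n)$ is asymptotically extremal in the sup norm so that the zero-distribution half of Theorem~\ref{thm:KVA} applies --- is left undone, and your closing argument does not replace it. Reducing the identification of the limit $\nu$ to the inclusion $\operatorname{supp}(\nu)\subseteq[q,1]$ is fine, but you then derive that inclusion from the variational conditions satisfied by the \emph{candidate} measure $\nu^\sigma$; this certifies that $\nu^\sigma$ solves the equilibrium problem, not that the zero counting measures converge to it. The implication ``asymptotically extremal polynomials have equilibrium zero distribution'' is exactly the content of Theorem~\ref{thm:KVA}, and it needs as input the norm asymptotics $\lim_{n}\|P_n(x^n)\|_{\infty,E_n(1/k)}^{1/n^2}=\exp(-w^{\sigma_{1/k}})$, which the paper establishes through Lemma~\ref{lem:4.3}, the $L^2$--$L^\infty$ comparison and lower bound from \cite{KuijlWVA} (Lemma~8.3 and Corollary~7.3), and the principle of descent applied to circular measures from \cite{Kuijl}. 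Your derivation of \eqref{normq} from ``the dominant lattice point $q^k$ with $k\approx n$'' is a heuristic that presupposes the zero distribution it is meant to accompany, so it cannot close this circle. In short: right strategy and correct solution of the limiting problem, but the two analytic pillars of the proof (the truncation/limit-interchange argument and the sup-norm extremality of the orthogonal polynomials) are missing.
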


We will prove this theorem in Section \ref{sec:4}

Our second main result is the case when the weight function generates an external field. In this case the asymptotic behavior
of the leading coefficient $\gamma_n$ and the asymptotic distribution of the scaled zeros $x_{1,n}^{1/n} < x_{2,n}^{1/n} < \cdots < x_{n,n}^{1/n}$
depends on the equilibrium measure $\mu_Q^\sigma$ on $\overline{\mathbb{D}}$ with constraint $\sigma$ and external field $Q$,
which has radial part $\nu_Q^\sigma$ and equilibrium constant $w_Q^\sigma$. These notions will be explained in Section \ref{sec:3}.

\begin{theorem}   \label{thm:2.2}
Let $p_n(x)=\gamma_n x^n+\cdots$ be the orthonormal polynomials on the exponential lattice $\{q^n, n=0,1,2,3,\ldots\}$ with weights
$w_k=w(q^k) >0$ such that
\begin{equation}   \label{wQ}
   \lim_{n \to \infty} \frac{-1}{n^2} \log w(x^n) = 2Q(x),   
\end{equation}
uniformly on every closed interval of $(0,1]$, 
with $Q$ an admissible external field on $[0,1]$ which is decreasing on $(0,\delta]$ for some $\delta > 0$.
Then
\[  \lim_{n \to \infty} \gamma_n^{1/n^2} = e^{w_Q^{\sigma}} , \]
and the distribution of the roots $x_{1,n}^{1/n} < \cdots < x_{n,n}^{1/n}$ is given by the density $\nu_Q^\sigma$.
\end{theorem}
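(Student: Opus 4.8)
The plan is to reproduce the argument behind Theorem~\ref{thm:2.1}, now with the external field switched on, using Theorem~\ref{thm:KVA} as the engine. I would start from the extremal characterization of the orthonormal polynomials. Writing $p_n=\gamma_n\pi_n$ with $\pi_n$ monic, the normalization of the inner product gives
\[
 \frac{1-q}{\gamma_n^2}=\|\sqrt w\,\pi_n\|_{2,q}^2=(1-q)\sum_{k\ge0}q^kw(q^k)\pi_n(q^k)^2,
\]
and $\pi_n$ minimizes $\|\sqrt w\,P_n\|_{2,q}$ over monic $P_n$ of degree $n$. Hence proving $\gamma_n^{1/n^2}\to e^{w_Q^\sigma}$ is the same as showing that this minimal weighted $L^2$-norm has $n^2$-th root limit $e^{-w_Q^\sigma}$, and the limiting distribution of the scaled zeros $y_{j,n}=x_{j,n}^{1/n}$ will come out once $\pi_n$ is recognised as asymptotically extremal for the corresponding weighted sup-norm problem.

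Next I would introduce the scaling $t=x^{1/n}$ that underlies the whole paper. It carries the lattice $\{q^k\}$ to the rescaled lattice $E_n=\{q^{k/n}:k\ge0\}$, and by \eqref{wQ} it turns the weight $w(t^n)$ into the external field: $-\tfrac1{n^2}\log w(t^n)\to 2Q(t)$, so that $\sqrt{w(t^n)}$ behaves like $e^{-n^2Q(t)}$ and, at the level of $n$-th roots, the weight $e^{-Q}$ plays the role of the factor $w$ in Theorem~\ref{thm:KVA}. Under this scaling the degree-$n$ polynomial $\pi_n$ produces the circularly symmetric configuration $\{y_{j,n}\omega_n^\ell\}$ of $\pi_n(t^n)$, and the identity $\tfrac1n\log|t^n-y^n|\to\log\max(t,y)$ shows that $\tfrac1{n^2}\log|\pi_n(t^n)|$ is governed by the circular kernel $\log\frac1{\max(r,s)}$. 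The limiting object is therefore the constrained equilibrium problem for circularly symmetric measures on $\overline{\mathbb D}$ with constraint $\sigma$ (density $-1/(t\log q)$) and field $Q$, whose radial part is $\nu_Q^\sigma$ and whose constant is $w_Q^\sigma$; the properties of this problem, and in particular the reduction of its radial part to the logarithmic form required by Theorem~\ref{thm:KVA}, are the business of Section~\ref{sec:3}.

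With the problem in this form I would check the three hypotheses of Theorem~\ref{thm:KVA} for the triple $(E_n),\sigma,Q$. On any compact subset $A$ of $(0,1]$ the count $\sigma_n(A)=\tfrac1n\#(A\cap E_n)$ is finite; since $q^{k/n}\in[t,1]$ exactly when $k\le n\log t/\log q$, one gets $\sigma_n([t,1])\to\log t/\log q$, i.e.\ $\sigma_n\to\sigma$ with the stated density; and every point of $(0,1]$ is normal because consecutive points $q^{k/n},q^{(k+1)/n}$ are separated by at least $c/n$ on each compact subinterval, the only non-normal point being $t=0$, which has zero capacity. Admissibility of $\sigma$ and of $Q$ is given. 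A routine comparison (a Nikolskii-type bound on $E_n$ together with $\|f\|_{2,q}\le\|f\|_\infty$) shows that the weighted $L^2$- and sup-norm extremal problems share the same $n^2$-th root behaviour, so Theorem~\ref{thm:KVA} identifies it as $e^{-w_Q^\sigma}$, giving $\gamma_n^{1/n^2}\to e^{w_Q^\sigma}$. Since $\pi_n$ realises the $L^2$-minimum, the same comparison makes it asymptotically extremal for the sup-norm problem, and the second conclusion of Theorem~\ref{thm:KVA} yields $\nu_n=\tfrac1n\sum_j\delta_{y_{j,n}}\to\nu_Q^\sigma$.

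The main obstacle is the passage from the circularly symmetric, degree-$n^2$ problem on the disk to the degree-$n$, logarithmic problem on the line to which Theorem~\ref{thm:KVA} applies: because $\tfrac1{n^2}\log|\pi_n(t^n)|$ is controlled by the max-kernel $\log\frac1{\max(r,s)}$ rather than by $\log\frac1{|r-s|}$, the genuine work (carried out in Section~\ref{sec:3}) is to cast the radial equilibrium problem in the logarithmic, constrained form required by the theorem, with the field and constraint matched so that its solution is $\nu_Q^\sigma$ and its constant is $w_Q^\sigma$. The second delicate point is uniform control up to the hard edge $t=0$, where $\sigma$ carries infinite mass and the convergence in \eqref{wQ} is only assumed on closed subintervals of $(0,1]$. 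Here the hypothesis that $Q$ is decreasing on $(0,\delta]$ is decisive: it makes the field large near the origin, pushing the mass of $\nu_Q^\sigma$ away from $0$, so that the lower edge of its support stays strictly positive and the scaled zeros $y_{j,n}$ cannot leak to $0$; the $O(1)$ zeros in any interval $[a,1]$ then contribute nothing in the limit.
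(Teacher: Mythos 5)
Your overall strategy is the paper's: rescale to $E_n=\{q^{k/n}\}$, identify the weight with the external field via \eqref{wQ}, pass between the weighted $L^2$- and sup-norm extremal problems (your ``Nikolskii-type bound'' is \cite[Lemma 8.3]{KuijlWVA}), and invoke Theorem \ref{thm:KVA} to get both the norm asymptotics and the zero distribution. But there is a genuine gap at the point you dismiss as a ``delicate point'': you propose to apply Theorem \ref{thm:KVA} to the \emph{full} scaled lattice $E_n$, excusing the origin because the single non-normal point $\{0\}$ has capacity zero. That does not work, because the other hypotheses of Theorem \ref{thm:KVA} also fail for the full lattice: condition 1 requires $\sigma_n(A)=\frac1n\#(A\cap E_n)<\infty$ for \emph{every} compact $A\subset[0,\infty)$, and $A=[0,1]$ contains infinitely many points of $E_n$; likewise condition 2 cannot hold for continuous $f$ with $f(0)\neq 0$, since $\sigma$ has infinite mass near $0$. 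Moreover, the orthonormal polynomial $p_n$ is extremal for the $L^2$-problem on the full lattice, not on any set to which Theorem \ref{thm:KVA} applies. The paper bridges exactly this gap with a truncation-and-double-limit mechanism that your proposal lacks: Lemma \ref{lem:4.2} shows that, for fixed $n$, the monic minimizers on the truncated lattices $E_1(b_k)$ converge (in norm and locally uniformly) to the full-lattice monic orthogonal polynomial; the truncation is chosen as $b_k=k^{-n}$ so that in the scaled variable it becomes the fixed cutoff $E_n(1/k)$; Theorem \ref{thm:KVA} is applied on $E_n(1/k)$ with the truncated, admissible constraint $\sigma_{1/k}$; and finally the limits $n\to\infty$ and $k\to\infty$ are interchanged, which is legitimate only because $0\notin\operatorname{supp}(\nu_Q^\sigma)$ forces the constants $w_Q^{\sigma_{1/k}}$ to be eventually constant in $k$.

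This points to the second missing ingredient: the claim $0\notin\operatorname{supp}(\mu_Q^\sigma)$, which you justify only by the heuristic that a field which is large near the origin repels mass. The direction of the heuristic is right, but it is precisely here that the hypothesis ``$Q$ decreasing on $(0,\delta]$'' must actually be used, and the paper proves it by an energy-comparison (sweeping) argument at the start of Section \ref{sec:5}: if $\mu_Q^\sigma$ had mass $M>0$ on $\{|z|\le\delta\}$, replacing that mass by a suitably placed circular measure (a point mass at radius $\epsilon$, or a piece of the constraint in the constrained case) would strictly decrease $I_Q$, using monotonicity of potentials of circular measures, contradicting minimality. Without this argument and without the truncation machinery above, the two conclusions of the theorem --- $\gamma_n^{1/n^2}\to e^{w_Q^\sigma}$ and $\nu_n\to\nu_Q^\sigma$ --- do not follow from Theorem \ref{thm:KVA} as you invoke it.
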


The proof of this result will be given in Section \ref{sec:5}.

\section{Equilibrium problem with constraints}   \label{sec:3}
The polynomials $p_n(x^n)$ are orthogonal on the set
\[   E_n = \{ q^{k/n}, k=0,1,2,3,\ldots \} . \]
The distribution of the points in this set $E_n$ is given by the measure $\sigma_n$ for which
\[  \sigma_n([a,b]) = \frac{ \# \{k : a \leq q^{k/n} \leq b \}}{n}, \]
where $0 < a < b \leq 1$ and this is equal to
\[   \sigma_n([a,b]) = \frac{\# \{k : n \log a \leq k \log q \leq n \log b \}}{n} \]
and clearly one has
\[   \lim_{n \to \infty} \sigma_n([a,b]) = \frac{\log b - \log a}{-\log q} = \frac{-1}{\log q} \int_a^b \frac{dt}{t}.  \]
The positive real zeros of $q_n(x) = p_n(x^n)$ are separated by the points in $E_n$, which means that the measure $\nu_n$ given in
\eqref{nu_n} is bounded from above by the measure $\sigma_n$:
\[   \nu_n([a,b]) \leq \sigma_n([a,b]),  \]
and for $n \to \infty$ this gives a constraint on the limiting measures
\[   \nu \leq \sigma, \qquad  \sigma'(t) = \frac{-1}{\log q} \frac{1}{t}, \qquad t \in (0,1].  \]

\subsection{Constrained equilibrium problem}  \label{sec:3.1}
Following Rakhmanov \cite{Rakh}, Dragnev and Saff \cite{DragSaff1,DragSaff2}, Kuijlaars and Rakhmanov \cite{KuijlRakh}, we look at the
following constrained equilibrium problem. 
Let \[   I(\mu) = \iint \log \frac{1}{|x-y|} \, d\mu(x)d\mu(y), \]
be the logarithmic energy of a probability measure $\mu$ and
\[   U(z;\mu) = \int \log \frac{1}{|z-x|}\, d\mu(x) \]
its logarithmic potential. We will be using circular symmetric measures $\mu$ on the closed unit disk $\overline{\mathbb{D}}$ for
which
\[   \int_{\overline{\mathbb{D}}} f(z)\, d\mu = \frac{1}{2\pi} \int_0^1 \int_0^{2\pi} f(re^{i\theta})\, d\theta d\nu(r), \]
where the measure $\nu$ on $[0,1]$ is the radial part of the measure $\mu$, and in such a case we use the notation $\mu = \hat{\sigma}$.

The equilibrium problem is to find the infimum of $I(\mu)$ over all circular symmetric probability measures $\mu$ on
the closed unit disk $\overline{\mathbb{D}}$ with a radial component $\nu \leq \sigma$.  
The infimum exists, it is unique, and we denote it by $\mu^\sigma$. It is characterized by the variational inequalities
\begin{eqnarray}
   U(z;\mu^\sigma) & \geq & w^\sigma, \qquad z \in \textup{supp}(\hat{\sigma} - \mu^\sigma),  \label{var1} \\
   U(z;\mu^\sigma) & \leq & w^\sigma, \qquad z \in \textup{supp}(\mu^\sigma),   \label{var2}
\end{eqnarray}
where $w^\sigma$ is a constant. This equilibrium measure has the extremal property
\[  \min_{x \in \textup{supp}(\hat{\sigma}-\mu^\sigma)} U(x;\mu^\sigma) = \max_{\mu=\hat{\nu} \textup{ with } \nu \leq \sigma}
    \min_{x \in \textup{supp}(\hat{\sigma}-\mu} U(x;\mu).  \]

The solution of the unconstrained equilibrium problem on the unit disk $\overline{\mathbb{D}}$ is the equilibrium measure on
the closed unit disk, which is known to be the Lebesgue measure on the unit circle $\mathbb{T}$ (the boundary of the closed unit disk).
This is a circular symmetric measure with radial part the Dirac measure $\delta_1$ at $1$. Hence this is also the solution of the unconstrained
equilibrium problem for circular symmetric measures. The radial part, however, violates the constraint $\nu \leq \sigma$ (and quite
seriously). We therefore need to distribute the Dirac measure $\delta_1$ over $[0,1]$ so that it satisfies the constraint $\nu \leq \sigma$
and is a probability measure. This gives the circular symmetric measure $\mu^\sigma = \hat{\nu}^\sigma$ with radial part
\[   (\nu^\sigma)'(t) = \frac{-1}{\log q} \frac{1}{t}, \qquad t \in [q,1].  \]

\begin{figure}[h]
\centering
\includegraphics[width=3in]{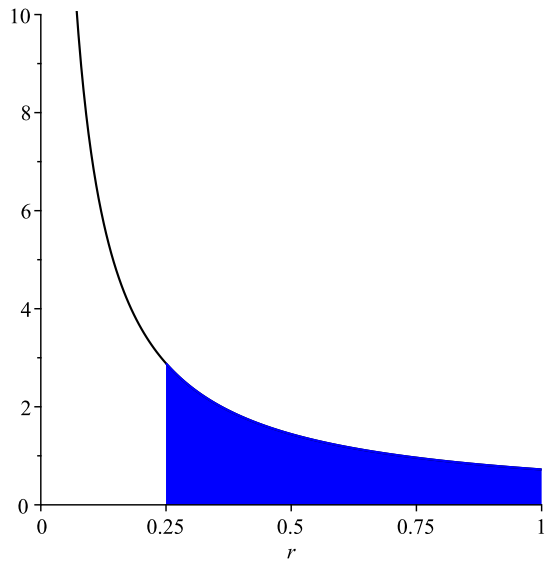}
\caption{The density of the radial part of the constrained equilibrium measure for $q=1/4$. The black curve is the constraint.}
\label{fig:2}
\end{figure}
 
Observe that the support of $\mu^\sigma$ is the annulus $\{q \leq |z| \leq 1 \}$ and that the support of $\hat{\sigma}-\mu^\sigma$ is the disk
$\{ |z| \leq q \}$. The radial part $\nu^\sigma$ coincides with the constraint $\sigma$ on the interval $[q,1]$ and has no mass on $[0,q]$.
Furthermore for $|z| \leq q$
\begin{eqnarray*}
    U(z;\mu^\sigma) &=& \frac{-1}{\log q}\, \frac{1}{2\pi} \int_0^{2\pi} \int_q^1 \frac{1}{r} \log \frac{1}{|z-re^{i\theta}|} \, dr d\theta \\
    &=& \frac{1}{\log q} \int_q^1 \frac{\log r}{r} \, dr \\
    &=& - \frac{\log q}{2}.
\end{eqnarray*}
For $|z|\geq 1$ one has
\[  U(z;\mu^{\sigma}) = - \log |z|,  \]
and for $q \leq |z| \leq 1$ one has
\begin{eqnarray*}
   U(z;\mu^\sigma) &=& \frac{1}{\log q} \left( \int_q^{|z|} \frac{\log |z|}{r}\, dr + \int_{|z|}^1 \frac{\log r}{r}\, dr \right) \\
                   &=& \frac{(\log |z|)^2}{2\log q} - \log |z|.
\end{eqnarray*} 
Hence the constant $w^\sigma$ in \eqref{var1}--\eqref{var2} is equal to $-\frac{1}{2} \log q$.

\begin{figure}[h]
\centering
\includegraphics[width=3in]{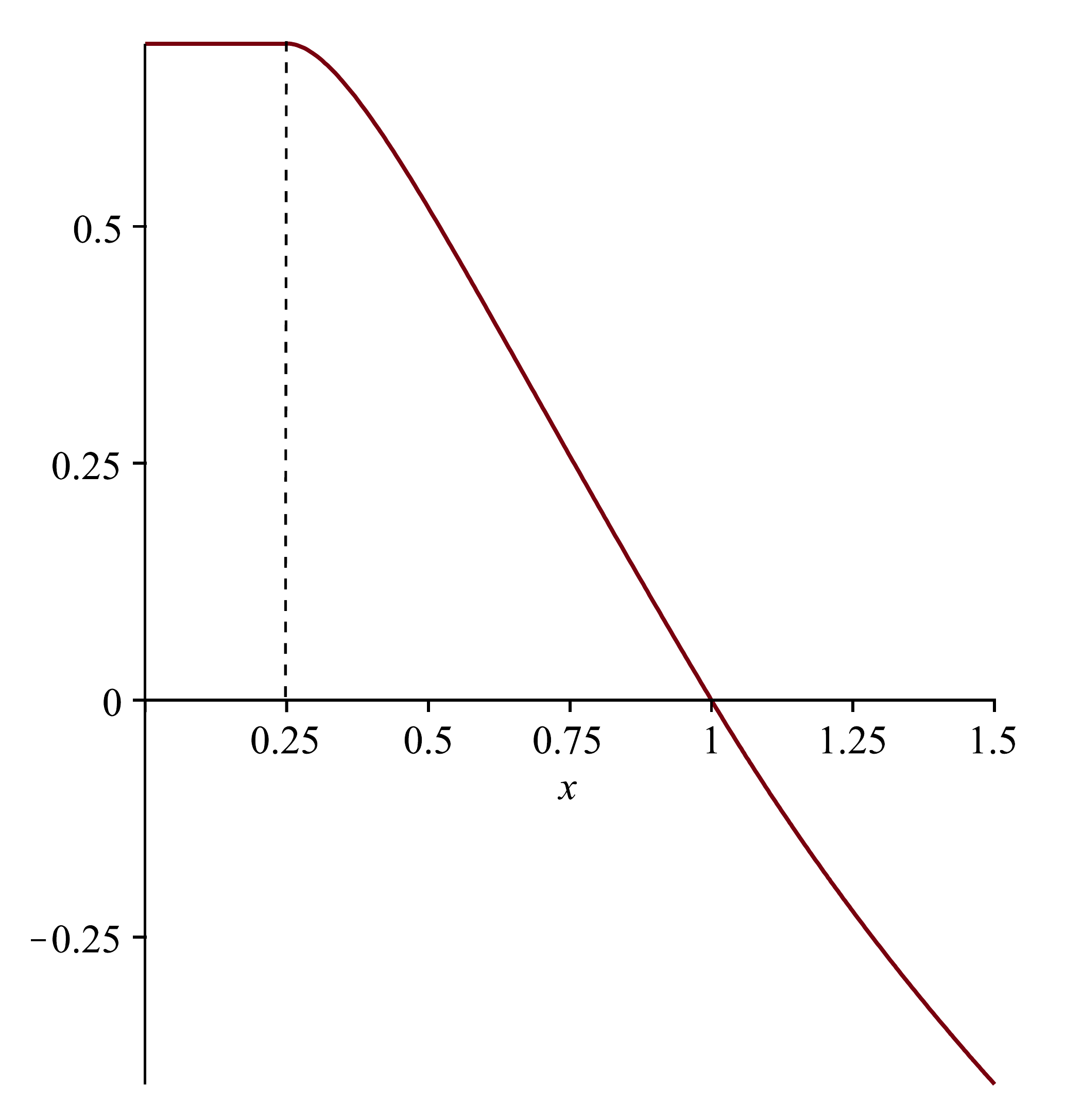}
\caption{The logarithmic potential $U(z;\mu^\sigma)$ for $q=1/4$ on $\mathbb{R}^+$.}
\label{fig:3}
\end{figure}

\subsection{Adding an external field}  \label{sec:3.2}

If we add an external field $Q$, then the equilibrium problem is to find the infimum of
\[   I_Q(\mu) = \iint \log \frac{1}{|x-y|} \, d\mu(x)d\mu(y) + 2 \int Q(|x|)\, d\mu(x)  \]
over all circular symmetric probability measures $\mu$ on the closed unit disk $\overline{\mathbb{D}}$ with
radial component $\nu \leq \sigma$. If $Q$ is admissible then the infimum exists, it is unique, and we denote it by $\mu_Q^\sigma$. It is characterized by the variational inequalities
\begin{eqnarray}
   U(z;\mu_Q^\sigma) + Q(|z|) & \geq & w_Q^\sigma, \qquad z \in \textup{supp}(\hat{\sigma} - \mu_Q^\sigma),  \label{Qvar1} \\
   U(z;\mu_Q^\sigma) + Q(|z|) & \leq & w_Q^\sigma, \qquad z \in \textup{supp}(\mu_Q^\sigma),   \label{Qvar2}
\end{eqnarray}
where $w_Q^\sigma$ is a constant.

The constrained equilibrium measure with external field $Q$ for circular symmetric measures can be computed explicitly in two cases:

\begin{enumerate}
  \item If $Q$ is a decreasing function on $(0,1]$. \\
   The equilibrium measure $\mu^\sigma$ without external field satisfies
   \begin{align*}
    U(z;\mu^\sigma) &= w^\sigma, \qquad  0 \leq |z| \leq q, \\
    U(z;\mu^\sigma) &\leq w^\sigma, \qquad  q \leq |z| \leq 1.
    \end{align*}
   Now use $Q(|z|) \geq Q(q)$ for $|z| \leq q$ and  $Q(|z|) \leq Q(q)$ for $q \leq |z|$ to find 
   \begin{align*}
    U(z;\mu^\sigma) + Q(|z|)  &\geq w^\sigma + Q(q), \qquad 0 \leq |z| \leq q, \\
    U(z;\mu^\sigma) + Q(|z|) &\leq w^\sigma + Q(q) , \qquad q \leq |z| \leq 1.
    \end{align*}
    These are the variational conditions \eqref{Qvar1}--\eqref{Qvar2}, hence the equilibrium measure is the one without external field ($\mu_Q^\sigma = \mu^\sigma$)
    but the Lagrange multiplier changes to  $w_Q^\sigma = w^\sigma + Q(q)$.
  \item If $Q$ is convex or $xQ'(x)$ is an increasing function on $(0,1]$. \\ 
   For the unconstrained equilibrium measure one has the following result \cite[Thm. 6.1 in Ch. IV]{SaffTotik} for an external field on $(0,\infty)$:
   \begin{theorem} \label{thm:ST}
   Let $Q$ be a circular symmetric field which is differentiable. 
   Suppose $xQ'(x)$ is increasing on $(0,\infty)$ or $Q$ is convex on $(0,\infty)$.
   Let $r_0$ be the smallest number for which $Q'(r) >0$ for all $r >r_0$ and $R_0$ the smallest solution of $xQ'(x)= 1$.
   Then $\mu_Q$ is supported on the annulus $r_0 \leq |z| \leq R_0$ and its radial part has density
   \[   (\nu_Q)'(t) = \bigl(tQ'(t)\bigr)', \qquad t \in [r_0,R_0].  \]
   \end{theorem}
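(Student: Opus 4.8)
The plan is to use the circular symmetry to collapse the two–dimensional weighted equilibrium problem to a one–dimensional problem for the radial part. Since $Q$ is rotation invariant and the minimizer is unique, $\mu_Q$ is invariant under every rotation, hence circular symmetric, and we may write $\mu_Q=\hat{\nu}_Q$ for a probability measure $\nu_Q$ on $(0,\infty)$. The essential tool is the classical identity
\[
  \frac{1}{2\pi}\int_0^{2\pi}\log\frac{1}{|z-\rho e^{i\theta}|}\,d\theta=-\log\max(|z|,\rho),
\]
which shows that the potential of a uniformly charged circle depends only on $|z|$ and $\rho$. Substituting this into $I_Q$ reduces the problem to minimizing
\[
  \iint -\log\max(r,\rho)\,d\nu(r)\,d\nu(\rho)+2\int Q(r)\,d\nu(r)
\]
over probability measures $\nu$ on $(0,\infty)$, with associated potential $U^\nu(r)=-\int\log\max(r,\rho)\,d\nu(\rho)$.

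Next I would work with the Frostman conditions for this reduced problem: there is a constant $F$ such that $U^\nu(r)+Q(r)=F$ on $\textup{supp}(\nu)$ and $U^\nu(r)+Q(r)\geq F$ elsewhere. Taking the ansatz that the support is a single interval $[r_0,R_0]$ and writing $N(r)=\nu([r_0,r])$, one splits the defining integral at $\rho=r$ and differentiates to obtain the clean relation
\[
  \frac{d}{dr}U^\nu(r)=-\frac{N(r)}{r}.
\]
Differentiating the equilibrium identity $U^\nu+Q=F$ on the support then gives $N(r)=rQ'(r)$, so that the density is $\nu'(r)=N'(r)=\bigl(rQ'(r)\bigr)'$, which is exactly the asserted formula. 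The endpoints are forced by the boundary data: $N(r_0)=0$ yields $r_0Q'(r_0)=0$, matching the definition of $r_0$, while the mass condition $N(R_0)=1$ yields $R_0Q'(R_0)=1$, identifying $R_0$ as the smallest solution of $xQ'(x)=1$.

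It remains to verify that the candidate $\hat{\nu}$ really is $\mu_Q$, and here the hypotheses enter. Nonnegativity of the density is precisely $\bigl(rQ'(r)\bigr)'\geq 0$ on $[r_0,R_0]$: this is immediate when $rQ'(r)$ is increasing, and when $Q$ is convex it follows from $Q''\geq 0$ together with $Q'\geq 0$ on $[r_0,R_0]$ (the latter by the choice of $r_0$); the total mass is $1$ by the endpoint computation. The main obstacle is checking the outer inequality $U^\nu(r)+Q(r)\geq F$ off the support. For $r>R_0$ all the mass lies inside, so $U^\nu(r)=-\log r$ and
\[
  \frac{d}{dr}\bigl(U^\nu(r)+Q(r)\bigr)=Q'(r)-\frac{1}{r}=\frac{rQ'(r)-1}{r}\geq 0,
\]
since $rQ'(r)\geq R_0Q'(R_0)=1$ there; as the sum equals $F$ at $R_0$ it stays $\geq F$ beyond. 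For $0<r<r_0$ the potential $U^\nu(r)$ is constant, so the sum has derivative $Q'(r)\leq 0$ (because $rQ'(r)\leq r_0Q'(r_0)=0$), hence it is decreasing and again exceeds its value $F$ at $r_0$. With all Frostman conditions satisfied, uniqueness identifies $\hat{\nu}$ as $\mu_Q$ and completes the proof.
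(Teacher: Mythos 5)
Your proposal is correct, but there is nothing in the paper to compare it against: the paper does not prove Theorem \ref{thm:ST} at all, it imports it as a known result from \cite[Thm.~6.1 in Ch.~IV]{SaffTotik}, and only afterwards adapts its \emph{statement} to the interval $(0,1]$ and to the constrained problem. What you have written is, in essence, the standard textbook argument behind the cited theorem, and all the key steps are sound: rotation invariance of $I_Q$ plus uniqueness of the minimizer forces circular symmetry; the averaging identity $\frac{1}{2\pi}\int_0^{2\pi}\log\frac{1}{|z-\rho e^{i\theta}|}\,d\theta=-\log\max(|z|,\rho)$ collapses the planar problem to the radial measure; differentiating the Frostman equality on the assumed interval support gives $N(r)=rQ'(r)$, hence the density $\bigl(rQ'(r)\bigr)'$ and the identification of $r_0$ and $R_0$; and checking the variational inequality off the support, followed by the converse Frostman/uniqueness theorem, legitimizes the interval ansatz a posteriori. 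This verification-by-potentials is exactly the technique the paper itself uses later (Section \ref{sec:3.2} and the $q$-Bessel example) for its constrained adaptations, so your proof fits the paper's toolkit; what the citation buys the authors is brevity, and what your argument buys is self-containedness. Two details deserve a sentence in a polished version. First, your endpoint relation $N(r_0)=0$, i.e.\ $r_0Q'(r_0)=0$, is not literally the definition of $r_0$: for $r_0>0$ it follows because minimality of $r_0$ together with monotonicity of $rQ'(r)$ forces $Q'\le 0$ on $(0,r_0)$, while $Q'>0$ on $(r_0,\infty)$, and a derivative cannot have a jump discontinuity (Darboux), so $Q'(r_0)=0$; without this your candidate measure need not have total mass $1$. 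Second, when only convexity of $Q$ is assumed, you should note that $rQ'(r)$ is nevertheless increasing on $(r_0,\infty)$ (there it is a product of two positive increasing functions), so your inequality $rQ'(r)\geq R_0Q'(R_0)=1$ for $r>R_0$, and likewise the monotonicity used below $r_0$, remain valid under either hypothesis. Neither point changes the structure or validity of your argument; you are also tacitly using the admissibility (growth) assumption on $Q$ that guarantees $\mu_Q$ exists with compact support, which the quoted statement suppresses but Saff--Totik impose.
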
 
   First we adapt this result to an external field on $(0,1]$. 
\begin{itemize}
  \item If $R_0  \leq 1$ then the result of Theorem \ref{thm:ST} still holds: $\mu_Q$ is supported on the annulus $r_0 \leq |z| \leq R_0$ and its radial part has density
   \[   (\nu_Q)'(t) = \bigl(tQ'(t)\bigr)', \qquad t \in [r_0,R_0].  \]
  \item If $r_0 < 1$ and $xQ'(x)=1$ has no solution on $(0,1]$ (which corresponds to $R_0 > 1$), then $\mu_Q$ is supported on the annulus $r_0 \leq |z| \leq 1$ and
   the radial part has density
   \[  (\nu_Q)'(t) = \bigl(tQ'(t)\bigr)', \qquad t \in [r_0,1],  \]
   together with a discrete part $c\delta_1$ at $1$ with mass
   \[   c = 1 - \int_{r_0}^1 \bigl( tQ'(t) \bigr)' \, dt = 1 - Q'(1). \]
   The missing proportion $c$ on $(1,R_0]$ of the measure in Theorem \ref{thm:ST} is replaced by a dirac measure $c \delta_1$ at $1$.
   \item If $r_0  \geq 1$ then $\mu_Q$ is the Lebesgue measure on the unit circle which has radial part $\delta_1$.
 \end{itemize}
  This can be proved easily by computing the potentials of these measures and verifying that they satisfy the required variational conditions for the unconstrained
  equilibrium measure. Then next, we need to take care of the constraint: we have to sweep the unconstrained equilibrium measure under the constraint.
  This will be easy whenever the density $(tQ'(t))'$ is already under the constraint $\sigma'$, because then one only needs to sweep the dirac measure $\delta_1$
  under the constraint. This is achieved by taking the constraining density $\sigma'$ on $[a,1]$ with an appropriate $a$, as we have done for the case without
  external field. We will work this out explicitly in some of the examples in Section \ref{sec:ex} .
\end{enumerate}

\section{Proof of Theorem \ref{thm:2.1}}   \label{sec:4}

We will use the sets 
\[  E_n(\epsilon) = \{ q^{k/n} : k \in \mathbb{N} \textrm{ and } q^{k/n} \geq \epsilon \}.  \]
These are the points of the $q$-lattice after the scaling by taking $n$th roots, but away from the origin. 
We avoid the origin because it is an accumulation point of the $q$-lattice, and near the origin the points in $E_n=\{q^{k/n}; k=0,1,2,3,\ldots\}$
are not normal.
We will let $\epsilon \to 0$ in a specific way to conclude
that some limits can be carried over to the orthogonal polynomials on the full $q$-lattice. The main argument that assures that this can be done is that $0$ is not contained
in the support of the equilibrium measure $\mu^\sigma$ and therefore also not in the support of the radial part $\nu^\sigma$.
The sets $E_n(\epsilon)$ satisfy the conditions of Theorem \ref{thm:KVA}. Indeed, the first conditions follow since $E_n(\epsilon)$  is a finite set for every $\epsilon > 0$,
the second condition follows as we explained at the beginning of Section \ref{sec:3} and then restricting the sets in this construction 
to $[\epsilon,1]$. To verify
the third condition, we note that we have the inequality
\[   q^{k/n} - q^{(k+1)/n} = (1-q^{1/n}) q^{k/n} \geq \frac{1-q}{n} q^{k/n} \geq \frac{(1-q)\epsilon}{n}.  \]
Hence every point $x \in [\epsilon,1]$ is a normal point for $(E_n)_{n \in \mathbb{N}}$.

First we prove the following
\begin{lemma}  \label{lem:4.2}
For any $n \in \mathbb{N}$, let $P_n$ be the monic orthogonal polynomial of degree $n$ for the weight function $w$ on the $q$-lattice, and $R_{n,\epsilon}^*$ the monic
polynomial of degree $n$ minimizing $\|w^{1/2} R_n\|_{2,E_1(\epsilon)}$ over all monic polynomials $R_n$ of degree $n$. Let $(b_k)_{k \in \mathbb{N}}$ be a sequence
of strictly positive real numbers converging to $0$. Then
\begin{equation}  \label{limRnbkPn}
   \lim_{k \to \infty} ||w^{1/2} R_{n,b_k}^*\|_{2,E_1(b_k)} = \|w^{1/2} P_n\|_{2,q}.  
\end{equation}
Moreover
\begin{equation}  \label{RnPn}
     \lim_{k \to \infty} R_{n,b_k}^*(x) = P_n(x)  
\end{equation}
uniformly on $[0,1]$. 
\end{lemma}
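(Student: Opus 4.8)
The plan is to regard both $\|w^{1/2}R_{n,\epsilon}^*\|_{2,E_1(\epsilon)}^2$ and $\|w^{1/2}P_n\|_{2,q}^2$ as values of one and the same quadratic functional, truncated to finitely many nodes in the first case. Writing $\epsilon\mapsto K(\epsilon)$ for the largest index with $q^{K(\epsilon)}\ge\epsilon$, the set $E_1(\epsilon)$ consists of the nodes $1,q,\dots,q^{K(\epsilon)}$ and
\[  \|w^{1/2}R\|_{2,E_1(\epsilon)}^2 = (1-q)\sum_{k=0}^{K(\epsilon)} q^k w(q^k)\,|R(q^k)|^2  \]
is a partial sum of $\|w^{1/2}R\|_{2,q}^2$. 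First I would record the easy upper bound: truncation only discards non-negative terms, so every monic $R$ satisfies $\|w^{1/2}R\|_{2,E_1(\epsilon)}\le\|w^{1/2}R\|_{2,q}$; applying this to $P_n$ and using that $R_{n,\epsilon}^*$ minimises the truncated norm gives $\|w^{1/2}R_{n,\epsilon}^*\|_{2,E_1(\epsilon)}\le\|w^{1/2}P_n\|_{2,E_1(\epsilon)}\le\|w^{1/2}P_n\|_{2,q}$. In particular $\limsup_{k\to\infty}\|w^{1/2}R_{n,b_k}^*\|_{2,E_1(b_k)}\le\|w^{1/2}P_n\|_{2,q}$, and the whole difficulty is the matching lower bound together with the convergence $R_{n,b_k}^*\to P_n$.

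The heart of the argument is a compactness estimate on the minimisers. Since $b_k\to0$, for all large $k$ one has $b_k<q^n$, so the fixed nodes $1,q,\dots,q^n$ all lie in $E_1(b_k)$ and the truncated least-squares problem has a unique monic minimiser $R_{n,b_k}^*$ (the $q$-lattice points are distinct and $w>0$, so with at least $n+1$ nodes the truncated norm is positive-definite on polynomials of degree $\le n$). From the upper bound just established, the partial sum over these first $n+1$ nodes obeys
\[  (1-q)\sum_{k=0}^{n} q^k w(q^k)\,|R_{n,b_j}^*(q^k)|^2 \le \|w^{1/2}R_{n,b_j}^*\|_{2,E_1(b_j)}^2 \le \|w^{1/2}P_n\|_{2,q}^2, \]
so each value $R_{n,b_j}^*(q^k)$, $0\le k\le n$, is bounded uniformly in $j$. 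Because a monic polynomial of degree $n$ is recovered from its values at the $n+1$ distinct nodes $1,q,\dots,q^n$ by a fixed Lagrange interpolation map, the coefficients of $R_{n,b_j}^*$ are bounded uniformly in $j$.

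With this boundedness in hand I would conclude by a subsequence argument. Given any subsequence of $(R_{n,b_k}^*)_k$, Bolzano--Weierstrass yields a further subsequence along which the coefficients, and hence $R_{n,b_j}^*$ itself, converge uniformly on $[0,1]$ to a monic polynomial $R^*$ of degree $n$. For every fixed $M$ and all $j$ with $K(b_j)\ge M$, the truncated norm dominates its partial sum up to $M$, which by the upper bound is at most $\|w^{1/2}P_n\|_{2,q}^2$; letting $j\to\infty$ (uniform convergence on the finite node set) and then $M\to\infty$ (monotone convergence) gives $\|w^{1/2}R^*\|_{2,q}^2\le\|w^{1/2}P_n\|_{2,q}^2$. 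Since $P_n$ is the unique monic minimiser of the strictly convex functional $\|w^{1/2}\,\cdot\,\|_{2,q}$ over monic polynomials of degree $n$, this forces $R^*=P_n$. As every subsequential limit equals $P_n$, the whole sequence converges, $R_{n,b_k}^*\to P_n$ uniformly on $[0,1]$, which is \eqref{RnPn}. Finally, bounding each $\|w^{1/2}R_{n,b_k}^*\|_{2,E_1(b_k)}$ from below by its partial sum up to a fixed $M$ and passing to the limit using $R_{n,b_k}^*\to P_n$ yields $\liminf_k\|w^{1/2}R_{n,b_k}^*\|_{2,E_1(b_k)}\ge\|w^{1/2}P_n\|_{2,q}$, which together with the earlier $\limsup$ bound gives \eqref{limRnbkPn}. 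The crux is exactly this lower bound: a priori the minimisers could be small on $E_1(b_k)$ while blowing up near the accumulation point $0$, and the Lagrange-interpolation compactness estimate on the fixed nodes $1,q,\dots,q^n$ is what prevents this and pins the limit down.
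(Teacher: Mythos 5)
Your proof is correct, and although its skeleton (truncation upper bound, compactness of the minimizers, uniqueness of the $L^2$ minimizer) matches the paper's, it differs in two substantive ways. First, the compactness step: the paper gets uniform boundedness of the family $\{R_{n,b_k}^*\}$ on $[0,1]$ in one stroke by observing that each $R_{n,b_k}^*$ is a monic polynomial whose $n$ zeros all lie in $[0,1]$, so that $|R_{n,b_k}^*(x)| \leq 1$ there; you instead bound the values at the $n+1$ fixed nodes $1,q,\dots,q^n$ by the truncated norm and recover coefficient bounds through the Lagrange interpolation map. Your route is more elementary, since it never uses that the discrete least-squares minimizer is an orthogonal polynomial with real zeros in the convex hull of the nodes, at the cost of a few extra lines. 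Second, the order of deductions is reversed: the paper proves the norm limit \eqref{limRnbkPn} first, using that the sequence $a_k=\|w^{1/2}R_{n,b_k}^*\|_{2,E_1(b_k)}^2$ is bounded and increasing together with the tail estimate in \eqref{PnRn} (the tail tends to zero precisely because of the uniform bound $|R_{n,b_k}^*|\leq 1$), and only then extracts \eqref{RnPn} by a subsequence argument; you prove \eqref{RnPn} first and then deduce \eqref{limRnbkPn} from it by comparing partial sums over a fixed initial block of nodes. A small dividend of your ordering: the paper's monotonicity claim for $(a_k)$ tacitly uses $E_1(b_k)\subseteq E_1(b_{k+1})$, i.e.\ that $(b_k)$ is decreasing, which is not among the hypotheses of the lemma, whereas your argument requires no monotonicity of $(b_k)$ at all (in fairness, the paper's monotonicity step is also dispensable, since its inequalities \eqref{RnleqPn} and \eqref{PnRn} already pin down $\limsup$ and $\liminf$).
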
 

\begin{proof}
Fix $n \in \mathbb{N}$ and a sequence $(b_k)_{k\in \mathbb{N}}$ of strictly positive numbers that converges to $0$. We will first show that the sequence $(a_k)_{k \in \mathbb{N}}$
with
\[   a_k = \| w^{1/2} R_{n,b_k}^*\|_{2,E_1(b_k)}^2 , \]
converges to $\|w^{1/2} P_n\|_{2,q}^2$. Let $j_k$ be the largest positive integer such that $b_k \leq q^{j_k}$. Then we have
\[    \|w^{1/2} R_{n,b_k}^*\|_{2,E_1(b_k)}^2 = \sum_{i=0}^{j_k} (q^i-q^{i+1}) w(q^i) \bigl( R_{n,b_k}^*(q^i) \bigr)^2   \]
and
\[    \|w^{1/2} P_n\|_{2,q}^2 = \sum_{i=0}^\infty (q^i - q^{i+1}) w(q^i) P_n^2(q^i).  \]
The polynomial $R_{n,b_k}^*$ minimizes the norm $\|w^{1/2} R_n\|_{2,E(b_k)}$ among all monic polynomials of degree $n$, hence
\begin{equation}   \label{RnleqPn}
  0 \leq \|w^{1/2}R_{n,b_k}^*\|_{2,E_1(b_k)}^2 \leq \|w^{1/2} P_n\|_{2,E_1(b_k)}^2 \leq \|w^{1/2} P_n\|_{2,q}^2, 
\end{equation}
so that $(a_k)_{k \in \mathbb{N}}$ is a bounded sequence. Furthermore
\[    \|w^{1/2} R_{n,b_{k+1}}^*\|_{2,E_1(b_{k+1})}^2 \geq \|w^{1/2} R_{n,b_{k+1}}^*\|_{2,E_1(b_k)}^2 \geq  \|w^{1/2} R_{n,b_{k}}^*\|_{2,E_1(b_k)}^2, \]
so that $(a_k)_{k \in \mathbb{N}}$ is an increasing sequence. Therefore the sequence converges. We will show that it converges to $\|w^{1/2} P_n\|_{2,q}^2$.
The monic orthogonal polynomial $P_n$ minimizes the $L_2$-norm among all monic polynomials of degree $n$, hence
\begin{equation}  \label{PnRn}
    \|w^{1/2} P_n\|_{2,q}^2 \leq \|w^{1/2} R_{n,b_k}^*\|_{2,q}^2  = a_k + \sum_{i=j_k+1}^\infty (q^i-q^{i+1}) w(q^i) \bigl( R_{n,b_k}^* \bigr)^2. 
\end{equation}
The family $\{R_{n,b_k}^*, k \in \mathbb{N}\}$ with $n$ fixed is uniformly bounded on $[0,1]$ since they are monic polynomials with $n$ zeros in $[0,1]$, hence
by taking the limit for $k \to \infty$  in \eqref{PnRn} we see that $\lim_{k \to \infty} a_k \geq \|w^{^1/2} P_n\|_{2,q}^2$, which together with the converse inequality \eqref{RnleqPn} proves \eqref{limRnbkPn}.

Since $\{R_{n,b_k}^*, k \in \mathbb{N}\}$ with $n$ fixed is uniformly bounded on $[0,1]$, there exists a subsequence that converges to a monic polynomial $Q_n$.
By taking the limit along that subsequence in \eqref{RnleqPn} we see that
\[   \|w^{1/2} Q_n\|_{2,q}^2 \leq \|w^{1/2} P_n \|_{2,q}^2, \]
but since $P_n$ minimizes the $L_2$-norm, it follows that $Q_n=P_n$, and hence every converging subsequence has the same limit and \eqref{RnPn} follows.
\end{proof}

We will now show that the norm with weight $w$ and the norm with $w=1$ are comparable whenever \eqref{wzero} holds.
\begin{lemma}  \label{lem:4.3}
Suppose that \eqref{wzero} holds, i.e.,
\[    \lim_{n \to \infty} \frac{1}{n^2} \log w(x^n) = 0, \]
uniformly for $x$ in closed subsets of $(0,1]$. 
Then for any $\epsilon \in (0,1)$ and for any sequence $(R_n)_{n \in \mathbb{N}}$ of polynomials of degree $\leq n$ and $R_n \not\equiv 0$, one has
\[    \lim_{n \to \infty}  \left( \frac{\|w^{1/2}(x^n) R_n(x^n)\|_{2,E_n(\epsilon)}}{\|R_n(x^n)\|_{2,E_n(\epsilon)}} \right)^{1/n^2} = 1 .  \]
\end{lemma}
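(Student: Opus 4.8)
My approach is a two-sided exponential squeeze. The crucial structural observation is that numerator and denominator are the \emph{same} discrete $\ell^2$ norm over the \emph{same} node set $E_n(\epsilon)$; the only difference is the extra factor $w^{1/2}(x^n)$ inside the numerator. Writing
\[
   \|f\|_{2,E_n(\epsilon)}^2 = \sum_{k:\,q^{k/n}\geq \epsilon} \lambda_{k,n}\, |f(q^{k/n})|^2
\]
with positive masses $\lambda_{k,n}$ (for $n=1$ these are the weights $q^i-q^{i+1}$ appearing in the proof of Lemma \ref{lem:4.2}), and evaluating at a node $x=q^{k/n}$, one has $x^n=q^k$, so that
\[
   \|w^{1/2}(x^n)R_n(x^n)\|_{2,E_n(\epsilon)}^2 = \sum_{k:\,q^{k/n}\geq \epsilon} \lambda_{k,n}\, w(q^k)\, |R_n(q^k)|^2 , \qquad
   \|R_n(x^n)\|_{2,E_n(\epsilon)}^2 = \sum_{k:\,q^{k/n}\geq \epsilon} \lambda_{k,n}\, |R_n(q^k)|^2 .
\]
The two sums share the masses $\lambda_{k,n}$ and the values $R_n(q^k)$; the ratio will therefore be controlled as soon as I can bound the factor $w(q^k)$ uniformly over all active nodes. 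This is exactly what the truncation to $E_n(\epsilon)$ provides: every node $q^{k/n}$ of $E_n(\epsilon)$ lies in the fixed compact interval $[\epsilon,1]\subset(0,1]$, on which \eqref{wzero} holds uniformly.

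The precise step is then elementary. Fix $\delta>0$. By the uniform convergence in \eqref{wzero} on $[\epsilon,1]$, there is an $N$ such that $e^{-\delta n^2}\leq w(x^n)\leq e^{\delta n^2}$ for all $x\in[\epsilon,1]$ and all $n\geq N$; specializing to $x=q^{k/n}$ (permissible since $q^{k/n}\geq\epsilon$) gives $e^{-\delta n^2}\leq w(q^k)\leq e^{\delta n^2}$ for every active node. Substituting these bounds term by term into the sum for the weighted squared norm yields
\[
   e^{-\delta n^2}\,\|R_n(x^n)\|_{2,E_n(\epsilon)}^2 \;\leq\; \|w^{1/2}(x^n)R_n(x^n)\|_{2,E_n(\epsilon)}^2 \;\leq\; e^{\delta n^2}\,\|R_n(x^n)\|_{2,E_n(\epsilon)}^2 ,
\]
an inequality between nonnegative reals that requires no division and is independent of the precise values of the masses $\lambda_{k,n}$.

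Finally, since $R_n\not\equiv 0$ the denominator $\|R_n(x^n)\|_{2,E_n(\epsilon)}$ is positive (a nonzero polynomial of degree $\leq n$ cannot vanish at all nodes of $E_n(\epsilon)$ once that set carries more than $n$ points), so I may divide, take positive square roots, and raise to the power $1/n^2$ to obtain
\[
   e^{-\delta/2} \;\leq\; \left( \frac{\|w^{1/2}(x^n)R_n(x^n)\|_{2,E_n(\epsilon)}}{\|R_n(x^n)\|_{2,E_n(\epsilon)}} \right)^{1/n^2} \;\leq\; e^{\delta/2}, \qquad n\geq N .
\]
Letting $n\to\infty$ and then $\delta\to 0$ forces the limit to be $1$.

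The content here is less an obstacle than a single crux: the \emph{uniformity} of the bound on $w(q^k)$. Were one to run the same computation over the full lattice $E_n$ instead of its truncation $E_n(\epsilon)$, the nodes would accumulate at $0\notin(0,1]$, where \eqref{wzero} gives no uniform control, and the squeeze would collapse. The $\epsilon$-truncation confining all nodes to $[\epsilon,1]$ is precisely what makes the weighted and unweighted norms agree to first order in the exponent, and it is the reason this comparison will later be transferable to the full $q$-lattice only because $0$ lies outside the support of the equilibrium measure.
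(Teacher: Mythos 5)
Correct, and essentially the paper's own argument: both proofs rest on the single fact that the uniform convergence \eqref{wzero} on the compact set $[\epsilon,1]$ traps the weight at every node of $E_n(\epsilon)$ between bounds of the form $e^{-\delta n^2}$ and $e^{\delta n^2}$, after which taking $n^2$-th roots and letting $\delta \to 0$ gives the limit. The paper merely packages the two bounds asymmetrically --- the upper one via $\|w^{1/2}(x^n)R_n(x^n)\|_{2,E_n(\epsilon)} \leq \|w^{1/2}(x^n)\|_{E_n(\epsilon)}\, \|R_n(x^n)\|_{2,E_n(\epsilon)}$ and the lower one via the minimum $m_n$ of $w^{1/2}(x^n)$ on $[\epsilon,1]$ --- which amounts to the same term-by-term squeeze you carry out symmetrically.
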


\begin{proof}
Fix $\epsilon \in (0,1)$ and a sequence of polynomials $(R_n)_{n \in \mathbb{N}}$ with $\deg R_n \leq n$ and $R_n \not\equiv 0$. We will first prove that
\begin{equation} \label{ratio<}
   \limsup_{n \to \infty} \left( \frac{\|w^{1/2}(x^n) R_n(x^n)\|_{2,E_n(\epsilon)}}{\|R_n(x^n)\|_{2,E_n(\epsilon)}} \right)^{1/n^2} \leq 1.  
\end{equation}
Observe that
\[    \|w^{1/2}(x^n)R_n(x^n)\|_{2,E_n(\epsilon)} \leq \|w^{1/2}(x^n)\|_{E_n(\epsilon)} \|R_n(x^n)\|_{2,E_n(\epsilon)}, \]
so that
\[   \left( \frac{\|w^{1/2}(x^n) R_n(x^n)\|_{2,E_n(\epsilon)}}{\|R_n(x^n)\|_{2,E_n(\epsilon)}} \right)^{1/n^2} \leq \|w^{1/2}(x^n)\|_{E_n(\epsilon)}^{1/n^2}
     \leq \|w^{1/2n^2}(x^n) \|_{E_n(\epsilon)} , \]
from which we find \eqref{ratio<}.

To prove the inequality in the other way, choose any $\delta >0$, then by assumption \eqref{wzero} there exists $N \in \mathbb{N}$ such that for $n \geq N$ one has
$w^{1/2n^2}(x_n) \geq 1-\delta$ for all $x \in [\epsilon,1]$. Since $w$ is only defined on the lattice points, we can assume without loss of generality that
it extends to a positive and continuous function on $[\epsilon,1]$. The function $w^{1/2}(x^n)$ then attains its minimum on $[\epsilon,1]$, and let's call this $m_n$.
Then for $n \geq N$ we have $m_n^{1/n^2} \geq 1-\delta$. Hence for $n \geq N$
\[   \left(  \frac{\|w^{1/2}(x^n) R_n(x^n) \|_{2,E_n(\epsilon)}}{\|R_n(x^n)\|_{2,E_n(\epsilon)}} \right)^{1/n^2}
     \geq \left( \frac{m_n \|R_n(x^n)\|_{2,E_n(\epsilon)}}{\|R_n(x^n)\|_{2,E_n(\epsilon)}} \right)^{1/n^2} \geq 1-\delta.  \]
Taking $\liminf$ then gives for every $\delta >0$
\[   \liminf_{n \to \infty}      \left(  \frac{\|w^{1/2}(x^n) R_n(x^n) \|_{2,E_n(\epsilon)}}{\|R_n(x^n)\|_{2,E_n(\epsilon)}} \right)^{1/n^2} \geq 1-\delta, \]
and for $\delta \to 0$ this gives the required inequality. Combining both inequalities then gives the desired result.
\end{proof} 

The result in Lemma \ref{lem:4.3} is given for the $L_2$-norm but in fact holds for every $L_p$-norm with $1 \leq p \leq \infty$, since we did not use any property
that holds only for $p=2$. Our preference for $p=2$ comes from our main interest in orthogonal polynomials on the $q$-lattice.

We can now prove our Theorem \ref{thm:2.1} by using the reasoning from \cite{Kuijl}, with a few modifications for our situation.

\begin{proof}[Proof of Theorem \ref{thm:2.1}]
The polynomial $P_n=p_n/\gamma_n$ is the monic polynomial of degree $n$ that minimizes the norm $\|w^{1/2}Q_n\|_{2,q}$ over monic polynomials
$Q_n$ of degree $n$, which is the same as saying that
\[   \|w^{1/2} P_n \|_{2,E_1}^{1/n^2} = \min_{Q_n(z)=z^n +\cdots} \|w^{1/2} Q_n(z)\|_{2,E_1}^{1/n^2} .  \]
Consider now the sequence $(b_k)_{k \in \mathbb{N}}$ defined by $b_k = 1/k^n$, then $b_k \to 0$ as $k \to \infty$ and $b_k >0$. So we can apply 
Lemma \ref{lem:4.2} to find
\[   \lim_{k \to \infty} \| w^{1/2} R_{n,b_k}^*\|_{2,E_1(b_k)}^{1/n^2} = \frac{1}{\gamma^{1/n^2}} \| w^{1/2} p_n \|_{2,E_1}^{1/n^2}.  \]
We claim that for every $n \in \mathbb{N}$
\[  \|w^{1/2}(x) R_{n,b_k}^*(x)\|_{2,E_1(b_k)} = \| w^{1/2}(x^n) R_{n,b_k}^*(x^n)\|_{2,E_n(\sqrt[n]{b_k})} . \]
This follows easily by the definition of the $L_2$-norms on these discrete sets. By our choice of $(b_k)_k$ we then have
\[  \|w^{1/2}(x) R_{n,b_k}^*(x)\|_{2,E_1(b_k)} = \| w^{1/2}(x^n) R_{n,b_k}^*(x^n)\|_{2,E_n(1/k)}, \]
and by Lemma \ref{lem:4.3}  we then have
\[   \lim_{n \to \infty} \|w^{1/2}(x)R_{n,b_k}^*(x)\|_{2,E_1(b_k)}^{1/n^2} = \lim_{n \to \infty} \|R_{n,b_k}^*(x^n)\|_{2,E_n(1/k)}^{1/n^2} . \]
From \cite[Lemma 8.3~(a)]{KuijlWVA} it follows that the $n$-th root of the $L_2$-norm on discrete sets is comparable with the $n$-th root of the sup-norm on the same set. Since $R_{n,b_k}(x^n)$ is a polynomial of degree $n^2$, we thus have
\[   \lim_{n \to \infty} \|w^{1/2}(x)R_{n,b_k}^*(x)\|_{2,E_1(b_k)}^{1/n^2} = \lim_{n \to \infty} \|R_{n,b_k}^*(x^n)\|_{\infty,E_n(1/k)}^{1/n^2} . \]
Note that $R_{n,b_k^*}$ is the polynomial that minimizes the $L_2$-norm on $E_1(b_k)$, therefore we can replace the polynomials
$R_{n,b_k}^*$ on the right hand side by the monic polynomials $P_{n,b_k}^*$ of degree $n$ that minimize the sup-norm on $E_1(b_k)$, giving
\[  \lim_{n \to \infty}  \|w^{1/2}(x)R_{n,b_k}^*(x)\|_{2,E_1(b_k)}^{1/n^2} = \lim_{n \to \infty} \|P_{n,b_k}^*(x^n)\|_{\infty,E_n(1/k)}^{1/n^2} . \]
Take now $P_n(x) = (x-x_{1,n})(x-x_{2,n})\cdots(x-x_{n,n})$ and assume that the $n$-th roots of these zeros are distributed according to the radial part
of $\mu^{\sigma_{1/k}}$, where $\sigma_{1/k}$ is the constraint restricted to the interval $[1/k,1]$. Observe that $\sigma_{1/k}$ is an admissible
constraint for $k$ large enough. Then the zeros of $P_n(x^n)$ are distributed according to $\mu^{\sigma_{1/k}}$ and the normalized 
zero counting measure for $P_n(x^n)$
\[  \xi_{n} = \frac{1}{n^2} \sum_{k=1}^n \sum_{j=0}^{n-1} \delta_{x_{k,n}^{1/n} \omega_n^j}, \qquad   \omega_n = e^{2\pi i/n}, \]
converges in weak-$\ast$ sense to $\mu_{\sigma_{1/k}}$. Then by \cite[Lemma 4~(e)]{Kuijl} it follows that
\[  \lim_{n \to \infty} U(z;\xi_{n}) = U(z;\mu^{\sigma_{1/k}}), \]
uniformly on  compact subsets of $\overline{\mathbb{D}}\setminus \{0\}$. 
Furthermore $|P_n(x^n)|$ assumes its maximum in $\textrm{supp}(\mu^{\sigma_{1/k}})$ due to the variational conditions. Hence we can take for any 
$n \geq 1$ a point $x_n \in \textrm{supp}(\mu^{\sigma_{1/k}})$ so that $|P_n(x^n)|$ attains its maximum at $x_n$. Obviously $(x_n)_n$ is a bounded
sequence and hence it has a convergent subsequence. Suppose that $x_n \to x^*$ along such a subsequence, then $x^* \in \textrm{supp}(\mu^{\sigma_{1/k}})$
and from the variational conditions \eqref{var1}--\eqref{var2} we have that $U(x^*;\mu^{\sigma_{1/k}}) = w^{\sigma_{1/k}}$.
Then by the principle of descent \cite[Thm. 6.8 in Ch.~I]{SaffTotik}, combined with the fact that $|P_n(x^n)|=\exp \bigl(-n^2 U(x:\xi_n)\bigr)$,
it follows that
\begin{eqnarray*}
   \limsup_{n \to \infty} \|P_n(x^n)\|_{\infty,E_n(1/k)}^{1/n^2}  &\leq& \lim_{n \to \infty} \|P_n(x^n)\|_{\infty,\overline{\mathbb{D}}}^{1/n^2} \\ 
   &\leq&   \exp \left( -U(x^*;\mu^{\sigma_{1/k}} \right) =  \exp (-  w^{\sigma_{1/k}}) .  
\end{eqnarray*}
From \cite[Corollary 7.3]{KuijlWVA} it follows that
\[    \liminf_{n \to \infty}  \|P_n(x^n)\|_{\infty,E_n(1/k)}^{1/n^2} \geq \exp( -  w^{\sigma_{1/k}}), \]
so that we can conclude that
\[   \lim_{n \to \infty}  \|P_n(x^n)\|_{\infty,E_n(1/k)}^{1/n^2} = \exp( -  w^{\sigma_{1/k}}).  \]
By the minimality of $P_{n,b_k}^*$ it follows that
\[  \lim_{n \to \infty}  \|P_{n,b_k}^*(x^n)\|_{\infty,E_n(1/k)}^{1/n^2} 
    \leq \lim_{n \to \infty}  \|P_n(x^n)\|_{\infty,E_n(1/k)}^{1/n^2} = \exp( -  w^{\sigma_{1/k}}). \]
Applying \cite[Corollary 7.3]{KuijlWVA} once more we find
\begin{equation}  \label{P*wk}
   \lim_{n \to \infty}  \|P_{n,b_k}^*(x^n)\|_{\infty,E_n(1/k)}^{1/n^2}     = \exp( -  w^{\sigma_{1/k}}).  
\end{equation}
Since $0 \notin \textrm{supp}(\nu^\sigma)=[q,1]$, it follows that the sequence $(w^{\sigma_{1/k}})_k$ is constant for $k$ large enough. 
Taking the limit for $k \to \infty$ in \eqref{P*wk} gives $\exp(-w^\sigma)$ on the right hand side, and on the left hand side
the limit for $n \to \infty$ and $k \to \infty$ can be interchanged since 
$1/k \notin \textrm{supp}(\nu^\sigma)$ for $k$ large enough, and hence $\nu^\sigma|_{[1/k,1]} = \nu^\sigma$. 
Hence
\[   \lim_{n \to \infty} \frac{1}{\gamma_n^{1/n^2}} \| w^{1/2}(x^n) p_n(x^n)\|_{2,E_n}^{1/n^2} =
      \lim_{k \to \infty} \lim_{n \to \infty} \|P_{n,b_k}^*(x^n)\|_{\infty,E_n(1/k)}^{1/n^2} = \exp(-w^\sigma), \]
and  $w^\sigma= -\frac12 \log q$, as is evident from the potential $U(x;\mu^\sigma)$ which was computed at the end of Section \ref{sec:3.1}.
This proves the asymptotic behavior \eqref{normq} for the norm $\gamma_n$ since $\| w^{1/2}(x^n) p_n(x^n)\|_{2,E_n}=1$.
The result about the zero distribution now simply follows from Theorem \ref{thm:KVA}. 
\end{proof}

\section{Proof of Theorem \ref{thm:2.2}}  \label{sec:5}

The proof for the case with an admissible external field $Q$ is very similar. The only important difference is that we have to make sure that
$0$ is not in the support of the equilibrium measure $\mu_w^\sigma$. For $Q=0$ this was clear since the support of $\mu^\sigma$ in given
by the annulus $\{ q \leq |z| \leq 1\}$, but for an external field we need an additional assumption on $Q$. A sufficient condition is
that $Q$ is decreasing on $(0,\epsilon)$ for some $\epsilon >0$. Indeed, suppose that on the contrary $\mu_w^\sigma$ has positive mass $M>0$
on $\{ |z| \leq \delta \}$, then consider the circular measure $\hat{\lambda}$ with radial part $\lambda$ given by
\[   \lambda = \nu_w^\sigma|_{[\epsilon,1]} + M \delta_{\epsilon}, \]
then it is easy to see that
\[   U(z;\mu_w^\sigma) - U(z;\hat{\lambda}) \geq 0, \qquad z \in \overline{\mathbb{D}},  \]
and
\[    \int Q(x) \, d\mu_w^\sigma(x) - \int Q(x)\, d\hat{\lambda}(x) \geq 0, \]
and since the logarithmic potential of a circular measure is decreasing on $[0,\infty)$ (see, e.g., \cite[Lemma 4 (c)]{Kuijl}) one 
has that
\[  I(\mu_w^\sigma) - I(\lambda) = \int U(x;\mu_w^\sigma)\, d\mu_w^\sigma(x) - \int U(x;\hat{\lambda})\, d\hat{\lambda}(x) \geq 0, \]
so that $I_w(\mu_w^\sigma) \geq I_w(\hat{\lambda})$, which violates the minimality of $\mu_w^\sigma$ for the unconstrained case.
The constrained case can be shown in a similar way if one replaces the $\delta_\epsilon$ by a suitable restriction 
of the constraint $\sigma$ to the interval $[a,\epsilon]$ with $a>0$.

The extension of Lemma \ref{lem:4.3} for an external field is
\begin{lemma} \label{lem:5.1}
Suppose that
\[   \lim_{n \to \infty} \frac{1}{n^2} \log w(x^n) = -2Q(x), \]
uniformly on compact subsets of $(0,1]$. Then for all $\epsilon \in (0,1)]$ and for any sequence $(R_n)_{n \in \mathbb{N}}$ of polynomials
of degree $\leq n$ and $R_n \not\equiv 0$, one has
\[  \lim_{n \to \infty}  \left( \frac{\|w^{1/2}(x^n) R_n(x^n)\|_{2,E_n(\epsilon)}}
    {\|\exp(-n^2Q(x))R_n(x^n)\|_{2,E_n(\epsilon)}} \right)^{1/n^2} = 1 .  \] 
\end{lemma}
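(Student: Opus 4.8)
The plan is to reduce the lemma to the field-free estimate of Lemma~\ref{lem:4.3} by absorbing the external field into a multiplier whose $n^2$-th root tends to $1$. First I would factor, for $x \in [\epsilon,1]$,
\[ w^{1/2}(x^n)\,R_n(x^n) = \phi_n(x)\,\bigl( e^{-n^2 Q(x)} R_n(x^n) \bigr), \qquad \phi_n(x) = w^{1/2}(x^n)\, e^{n^2 Q(x)}, \]
so that, writing $g_n(x) = e^{-n^2 Q(x)} R_n(x^n)$ for the function appearing in the denominator, the ratio in the statement becomes
\[ \frac{\|w^{1/2}(x^n) R_n(x^n)\|_{2,E_n(\epsilon)}}{\|e^{-n^2 Q(x)} R_n(x^n)\|_{2,E_n(\epsilon)}} = \frac{\|\phi_n g_n\|_{2,E_n(\epsilon)}}{\|g_n\|_{2,E_n(\epsilon)}}. \]
This is exactly the shape of the ratio treated in Lemma~\ref{lem:4.3}, with $\phi_n$ now playing the role that $w^{1/2}(x^n)$ played there.

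The key input is that $\phi_n(x)^{1/n^2} \to 1$ uniformly on $[\epsilon,1]$. Indeed,
\[ \frac{1}{n^2}\log \phi_n(x) = \frac{1}{2n^2}\log w(x^n) + Q(x), \]
and by hypothesis $\tfrac{1}{n^2}\log w(x^n) \to -2Q(x)$ uniformly on the compact set $[\epsilon,1]\subset(0,1]$, so the right-hand side tends to $0$ uniformly. Extending $w$ to a positive continuous function on $[\epsilon,1]$ as in Lemma~\ref{lem:4.3}, each $\tfrac{1}{n^2}\log w(x^n)$ is continuous, so its uniform limit forces $Q$ to be finite and continuous on $[\epsilon,1]$; hence $\phi_n$ is a genuine positive continuous function there, and since the exponent above is uniformly bounded and tends uniformly to $0$, applying $\exp$ gives $\phi_n^{1/n^2}\to 1$ uniformly.

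With this in hand the proof is the verbatim two-sided argument of Lemma~\ref{lem:4.3}. For the upper bound I would use $\|\phi_n g_n\|_{2,E_n(\epsilon)} \leq \|\phi_n\|_{E_n(\epsilon)}\,\|g_n\|_{2,E_n(\epsilon)}$ together with $\|\phi_n\|_{E_n(\epsilon)}^{1/n^2} \leq \sup_{x\in[\epsilon,1]}\phi_n(x)^{1/n^2}\to 1$, which gives $\limsup \leq 1$. For the lower bound, given $\delta>0$ the uniform convergence yields $\phi_n(x)\geq(1-\delta)^{n^2}$ on $[\epsilon,1]\supset E_n(\epsilon)$ for all large $n$, whence $\|\phi_n g_n\|_{2,E_n(\epsilon)} \geq (1-\delta)^{n^2}\|g_n\|_{2,E_n(\epsilon)}$ and $\liminf\geq 1-\delta$; letting $\delta\to 0$ gives $\liminf\geq 1$. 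Combining the two inequalities yields the claim.

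I do not expect a deep obstacle; the only point that genuinely needs care is the legitimacy of the factorization, namely that $e^{n^2 Q(x)}$ is finite and $\phi_n$ well-defined on all of $[\epsilon,1]$, and that the convergence $\phi_n^{1/n^2}\to 1$ is uniform rather than merely pointwise. This is precisely where the choice to work on $E_n(\epsilon)$ instead of the full lattice is used: staying a fixed distance $\epsilon$ from the origin confines everything to a compact subset of $(0,1]$ on which $Q$ is finite and continuous and the hypothesis gives uniform control, so the degeneracy of the weight and the field near $0$ never enters.
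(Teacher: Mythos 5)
Your proof is correct, and it takes a genuinely different route from the paper's. The paper first reduces Lemma~\ref{lem:5.1} to the sup-norm statement \eqref{ratioinf}, invoking Lemma 8.3(b) of \cite{KuijlWVA} for both weights $w^{1/2}(x^n)$ and $\exp(-n^2Q(x))$ to pass between the $L_2$- and $L_\infty$-norms on the discrete sets, and then runs a two-sided $\delta$-sandwich in the sup-norm, where the constants $e^{\pm\delta n^2}$ factor exactly out of the norm. You instead absorb the field into the multiplier $\phi_n(x)=w^{1/2}(x^n)e^{n^2Q(x)}$ and run the sandwich directly in the weighted $\ell^2$-norm, using only the pointwise bounds
\[
\Bigl(\inf_{[\epsilon,1]}\phi_n\Bigr)\,|g_n(x)| \;\leq\; |\phi_n(x)g_n(x)| \;\leq\; \Bigl(\sup_{[\epsilon,1]}\phi_n\Bigr)\,|g_n(x)|,
\qquad x \in E_n(\epsilon),
\]
together with the fact that the $n^2$-th roots of both the inf and the sup tend to $1$, which is exactly the hypothesis \eqref{wQ} rearranged; this is the paper's own proof of Lemma~\ref{lem:4.3} repeated verbatim with $w^{1/2}(x^n)$ replaced by $\phi_n$ (and it never matters that the denominator $g_n$ is not itself a polynomial, since only pointwise multiplicative estimates are used). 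Your route is more self-contained: it avoids the appeal to \cite[Lemma 8.3(b)]{KuijlWVA} entirely, and because it uses only pointwise bounds it proves the statement for every $L_p$-norm, $1\leq p\leq\infty$, in one stroke --- a fact the paper notes separately after its proof. What the paper's detour buys is the explicit sup-norm version \eqref{ratioinf} as an intermediate result, but nothing downstream depends on obtaining it this way: in the proof of Theorem~\ref{thm:2.2} the passage to sup-norms is made by a separate application of \cite[Lemma 8.3(a)]{KuijlWVA}. Your attention to the one genuine pitfall --- that the uniform convergence hypothesis forces $Q$ to be finite and continuous on $[\epsilon,1]$, so that $\phi_n$ is a well-defined positive function and the factorization is legitimate --- is exactly the right point to have checked.
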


\begin{proof}
We claim that it is sufficient to prove that
\begin{equation}  \label{ratioinf}
\lim_{n \to \infty}  \left( \frac{\|w^{1/2}(x^n) R_n(x^n)\|_{\infty,E_n(\epsilon)}}{\|\exp(-n^2Q(x))R_n(x^n)\|_{\infty,E_n(\epsilon)}} \right)^{1/n^2} = 1 . 
\end{equation}
Indeed, if \eqref{ratioinf} holds, then we can use \cite[Lemma 8.3 (b)]{KuijlWVA} for both $w^{1/2}(x^n)$ and $\exp(-n^2 Q(x))$. We know already
that the constraint is admissible and that the sets $E_n(\epsilon)$ satisfy the conditions of Theorem \ref{thm:2.1}. We thus have
\[   \lim_{n \to \infty}  \left( \frac{\|w^{1/2}(x^n) R_n(x^n)\|_{2,E_n(\epsilon)}}{\|w(x^n)R_n(x^n)\|_{\infty,E_n(\epsilon)}} \right)^{1/n^2} = 1 , \]
and
\[ \lim_{n \to \infty}  \left( \frac{\|exp(-n^2Q(x))R_n(x^n)\|_{2,E_n(\epsilon)}}{\|\exp(-n^2Q(x))R_n(x^n)\|_{\infty,E_n(\epsilon)}} \right)^{1/n^2} = 1 , \]
so that the result of the lemma indeed follows.

Choose any $\delta>0$, then for $n$ large enough we have that $\exp(-\delta-Q(x))^{n^2} < w^{1/2}(x^n) < \exp(\delta-Q(x))^{n^2}$ for all
$x \in [\epsilon,1]$. Hence we have for $n$ large enough
\[  \left( \frac{\|w^{1/2}(x^n) R_n(x^n)\|_{\infty,E_n(\epsilon)}}{\|\exp(-n^2Q(x))R_n(x^n)\|_{\infty,E_n(\epsilon)}} \right)^{1/n^2}
   \leq  \left( \frac{\|\exp(\delta-Q(x))^{n^2} R_n(x^n)\|_{\infty,E_n(\epsilon)}}{\|\exp(-n^2Q(x))R_n(x^n)\|_{\infty,E_n(\epsilon)}} \right)^{1/n^2}
   = \exp(\delta). \]
Since this holds for any $\delta>0$, we can let $\delta \to 0$ to find
\[  \limsup_{n \to \infty} \left( \frac{\|w^{1/2}(x^n) R_n(x^n)\|_{\infty,E_n(\epsilon)}}
{\|\exp(-n^2Q(x))R_n(x^n)\|_{\infty,E_n(\epsilon)}} \right)^{1/n^2} \leq 1.  \]
In a similar way
\begin{eqnarray*}  
 \left( \frac{\|w^{1/2}(x^n) R_n(x^n)\|_{\infty,E_n(\epsilon)}}{\|\exp(-n^2Q(x))R_n(x^n)\|_{\infty,E_n(\epsilon)}} \right)^{1/n^2}
  &\geq & \left( \frac{\|\exp(-\delta-Q(x))^{n^2} R_n(x^n)\|_{\infty,E_n(\epsilon)}}{\|\exp(-n^2Q(x))R_n(x^n)\|_{\infty,E_n(\epsilon)}} \right)^{1/n^2} \\
  & = & \exp(-\delta), 
\end{eqnarray*}
and by letting $\delta \to 0$ we have
\[  \liminf_{n \to \infty} \left( \frac{\|w^{1/2}(x^n) R_n(x^n)\|_{\infty,E_n(\epsilon)}}
    {\|\exp(-n^2Q(x))R_n(x^n)\|_{\infty,E_n(\epsilon)}} \right)^{1/n^2}   \geq 1.  \]
Together both inequalities gives the desired result \eqref{ratioinf}.  
\end{proof}

Lemma \ref{lem:5.1} is formulated for the $L_2$-norm and we actually prove it for the $L_\infty$-norm, but it holds for every $L_p$-norm
with $1 \leq p \leq \infty$. 

\begin{proof}[Proof of Theorem \ref{thm:2.2}]
Recall that the monic polynomial $P_n(x)=p_n(x)/\gamma_n$ minimizes the $L_2$-norm $\|Q_n\|_{2,q}$ among all monic polynomials $Q_n$ of degree $n$.
Consider the sequence $(b_k)_{k\in \mathbb{N}}$ with 
\[  b_k = \left( \frac{1}{k} \right)^n, \]
then $b_k >0$ and $b_k \to 0$. We can then apply Lemma \ref{lem:4.2} to find
\[  \lim_{k \to \infty} \|w^{1/2}R_{n,b_k}^*\|_{2,E_1(b_k)}^{1/n^2} = \frac{1}{\gamma_n^{1/n^2}} \| w^{1/2} p_n \|^{1/n^2} = \frac{1}{\gamma_n^{1/n^2}}.
\]
From the definition of the $L_2$-norms, it easily follows that
\[   \|w^{1/2} R_{n,b_k}^* \|_{2,E_1(b_k)} = \| w^{1/2}(x^n) R_{n,b_k}^*(x^n)\|_{2,E_n(1/k)}.  \]
By Lemma \ref{lem:5.1} it follows that
\[  \lim_{n \to \infty}  \|w^{1/2} R_{n,b_k}^* \|_{2,E_1(b_k)}^{1/n^2} = \lim_{n \to \infty} \|\exp(-n^2Q(x)) R_{n,b_k}^*(x^n)\|_{2,E_n(1/k)}^{1/n^2}. \]
From \cite[Lemma 8.3 (a)]{KuijlWVA} it follows that the $n$-th root of the $L_2$-norm on the discrete sets is comparable with the $n$-th root of the
sup-norm on the same set, and since $R_{n,b_k}^*(x^n)$ is a polynomial of degree $n^2$, this gives
\[  \lim_{n \to \infty}  \|w^{1/2} R_{n,b_k}^* \|_{2,E_1(b_k)}^{1/n^2} 
= \lim_{n \to \infty} \|\exp(-n^2Q(x)) R_{n,b_k}^*(x^n)\|_{\infty,E_n(1/k)}^{1/n^2}. \]
The polynomial $R_{n,b_k}^*$ on the left side minimizes the $L_2$-norm on $E_1(b_k)$, hence we can replace it on the right hand side
by $P_{n,b_k}^*$ which minimizes the $L_\infty$-norm on $E_1(b_k)$ and retain equality:
\[  \lim_{n \to \infty}  \|w^{1/2} R_{n,b_k}^* \|_{2,E_1(b_k)}^{1/n^2} 
= \lim_{n \to \infty} \|\exp(-n^2Q(x)) P_{n,b_k}^*(x^n)\|_{\infty,E_n(1/k)}^{1/n^2}. \]
From here on the proof continues exactly as the proof of Theorem \ref{thm:2.1}, except we use the variational conditions \eqref{Qvar1}--\eqref{Qvar2}
instead of \eqref{var1}--\eqref{var2}.
\end{proof} 

\section{Examples}  \label{sec:ex}
In this section we apply Theorem \ref{thm:2.1} and Theorem \ref{thm:2.2} to three families of classical orthogonal polynomials on the $q$-lattice.

\subsection{Little $q$-Laguerre polynomials}
Little $q$-Laguerre polynomials (or Wall polynomials) are given by \cite[\S 14.20]{KLS}
\[  p_n(x;a|q) = {}_2\phi_1 \left( \left. \begin{array}{c} q^{-n}, 0 \\ aq \end{array} \right| q, qx \right). \]
For $0 < aq < 1$ the orthogonality relations are
\[   \sum_{k=0}^\infty p_n(q^k;a|q)p_m(q^k;a|q) \frac{(aq)^k}{(q;q)_k} = \frac{(aq)^n}{(aq;q)_\infty} \frac{(q;q)_n}{(aq;q)_n} \delta_{n,m} . \] 
The weights are thus given by
\[     w_k = \frac{a^k}{(q;q)_k} = w(q^k;a), \qquad  w(x;a) = (qx;q)_\infty x^\alpha, \]
where $q^\alpha = a$. Clearly we have
\[   \lim_{n \to \infty} \frac{1}{n^2} \log w(x^n;a) = 0, \qquad x \in [0,1], \]
so that Theorem \ref{thm:2.1} holds if we keep $a$ fixed. In particular one has
\[   \lim_{n \to \infty} \gamma_n^{1/n^2} = q^{-1/2}, \]
which can easily be verified by using the explicit expression
\[   \gamma_n(a) = q^{-\frac{n(n-1)}{2}} \frac{\sqrt{(aq;q)_\infty}}{\sqrt{(aq;q)_n (aq)^n (q;q)_n}} . \] 
The asymptotic distribution of the points $x_{1,n}^{1/n},\ldots,x_{n,n}^{1/n}$ is then given by the measure with density (see Fig. \ref{fig:2})
\[  (\nu^\sigma)'(t) = \frac{-1}{\log q} \frac{1}{t}, \qquad t \in [q,1] .  \]
This means that the zeros are dense on $[q,1]$ and the constraint $\sigma$ holds on the full support of this equilibrium measure.

If we take $a=q^{2n\alpha}$ with $\alpha > 0$ and consider the polynomials $p_n(x;q^{2n\alpha}|q)$, then
\[   \lim_{n \to \infty} - \frac{1}{n^2} \log w(x^n,q^{2n\alpha}|q) = -2 \alpha \log x, \qquad x \in (0,1], \]
so that we get an external field $Q(x) = -\alpha \log x$ on $(0,1]$. For $\alpha >0$ this is a decreasing function on $(0,1]$, which means that if we 
add $Q(|x|)$ to  \eqref{var1}--\eqref{var2} for the equilibrium measure then we get the variational inequalities 
\eqref{Qvar1}--\eqref{Qvar2} and the constant $w_Q^\sigma$ is $w^\sigma + Q(q)=-\frac12 \log q - \alpha \log q$.
Hence Theorem \ref{thm:2.2} gives
\[   \lim_{n \to \infty} \gamma_n^{1/n^2} = q^{-1/2-\alpha}, \]
and this can be easily verified using the explicit expression of $\gamma_n$ given above.
The asymptotic distribution of the points  $x_{1,n}^{1/n},\ldots,x_{n,n}^{1/n}$ is still the same measure $\nu^\sigma$.

\subsection{$q$-Bessel polynomials}

The $q$-Bessel polynomials are given by \cite[\S 14.22]{KLS}
\[    y_n(x;a;q) = {}_2\phi_1 \left( \left. \begin{array}{c} q^{-n}, -aq^{n} \\ 0 \end{array} \right| q, qx \right), \]
where $a >0$. Their orthogonality is given by
\[    \sum_{k=0}^\infty \frac{a^k}{(q;q)_k} q^{\binom{k}{2}} q^k y_n(q^k;a;q) y_m(q^k;a;q) 
     = (q;q)_n (-aq^n;q)_\infty \frac{(aq)^n q^{\binom{n}{2}}}{(1+aq^{2n})} \delta_{n,m}, \]
so that the leading coefficient of the orthonormal polynomial is given by
\[  \gamma_n = q^{-\frac{3n(n-1)}{4}} (-a;q)_{2n} \frac{\sqrt{1+aq^{2n}}}{\sqrt{(q;q)_n(-a;q)_\infty (aq)^n(-a;q)_n}} . \]
The weights are
\[   w_k = q^{\frac{k(k-1)}{2}} \frac{(aq)^k}{(q;q)_k} = w(q^k), \]
where
\[    w(x) = \exp \left( \frac{(\log x)^2}{2 \log q} \right) x^{\alpha+1/2} \frac{(qx;q)_\infty}{(q;q)_\infty}. \]
In this case
\[    \lim_{n \to \infty} \frac{-1}{n^2} \log w(x^n) =  - \frac{(\log x)^2}{2 \log q}, \]
so that
\[   Q(x) =  - \frac{(\log x)^2}{4 \log q}. \]
This external field $Q$ is decreasing on $(0,1]$ so that the constrained equilibrium measure is again
$\nu^\sigma$ and the equilibrium constant is $w_Q^\sigma = w^\sigma + Q(q)= -\frac34 \log q$.
Theorem \ref{thm:2.2} then gives
\[  \lim_{n \to \infty} \gamma_n^{1/n^2} = q^{-3/4}, \]
which can indeed be verified from the explicit expression for $\gamma_n$. The asymptotic distribution of 
the $x_{1,n}^{1/n},\ldots,x_{n,n}^{1/n}$ is again given by the measure $\nu^\sigma$.

One could also take $a=q^{2n\alpha}$. The external field then becomes
\begin{equation}  \label{eq:Qex}
     Q(x) = - \frac{(\log x)^2}{4\log q} - \alpha \log x, \qquad x \in (0,1]. 
\end{equation}
For $\alpha > 0$ one still does not change the asymptotic distribution of the
scaled zeros, but the equilibrium constant becomes $w_Q^\sigma = w^\sigma - \frac14 \log q -\alpha \log q=
(-\frac34 -\alpha) \log q$. Theorem \ref{thm:2.2} then gives
\[    \lim_{n \to \infty} \gamma_n^{1/n^2} = q^{-\alpha-\frac34} .  \]
For $\alpha < 0$ the situation changes because the external field $Q$ in \eqref{eq:Qex} is no longer decreasing on $(0,1]$, but only decreasing on $(0,q^{-2\alpha})$. 
But then we can use Theorem \ref{thm:ST} and adapt it to our situation on $(0,1]$ and constraint $\sigma$. 
Observe that 
\[    Q'(r) = - \frac{\log r}{2\log q} \frac{1}{r} - \frac{\alpha}{r} , \qquad 0 < r \leq 1 \]
so that $rQ'(r)$ is increasing on $(0,1]$ for every $\alpha \in \mathbb{R}$. We see that $Q'(r) >0$ for $r > q^{-2\alpha}$ and that
$RQ'(R)=1$ for $R=q^{-2(1+\alpha)}$. If $R \leq 1$ (i.e., $\alpha \leq -1$) the unconstrained equilibrium measure is then given by the circular symmetric measure $\mu_Q$ with
radial component $\nu_Q$ that has the density
\[    \nu_Q'(r) = (rQ'(r))' = \frac{-1}{2 \log q} \frac{1}{r}, \qquad   r \in [q^{-2\alpha}, q^{-2(1+\alpha)}]. \]
Observe that $\nu_Q' \le \sigma'$ so that this unconstrained equilibrium measure is also
the solution of the constrained equilibrium problem. 
The potential of this circular symmetric measure with radial part $\nu_Q$ is given by
\[   U(z;\mu_Q) = \begin{cases} (2\alpha+1) \log q, & |z| < q^{-2\alpha} , \\[6pt]
                            \ds    \frac{(\log |z|)^2}{4 \log q} + \alpha \log |z| + (\alpha+1)^2 \log q, & q^{-2\alpha} < |z| < q^{-2(\alpha+1)}, \\[10pt]
                                - \log |z|, & |z| > q^{-2(\alpha+1)} ,
                  \end{cases}  \]
and then one easily verifies that the variational inequalities \eqref{Qvar1}--\eqref{Qvar2} hold. In fact, one has
\[      U(x;\mu_Q) + Q(x) = (\alpha+1)^2 \log q , \qquad  q^{-2\alpha} < x < q^{-2(\alpha+1)}, \]
so that the equilibrium constant is $w_Q^\nu = (\alpha+1)^2 \log q$, see Fig.~\ref{fig:4} on the left. It then follows from Theorem \ref{thm:2.2} that
\[   \lim_{n \to \infty}  \gamma_n^{1/n^2} = q^{(\alpha+1)^2}   .   \]  
For $-1 < \alpha < 0$ we see that $q^{-2(\alpha+1)} >1$ so there is no solution $R_0$ in $(0,1]$. In that case the unconstrained equilibrium problem is the circular symmetric measure with radial part $\nu_Q$ that has density
\begin{equation}   \label{Qdens}
   \frac{-1}{2 \log q} \frac{1}{r} , \qquad r \in [q^{-2\alpha},1], 
\end{equation}
together with a with a Dirac mass of size $1+\alpha$  at $r=1$. This can be verified by computing the potential and verifying the
variational conditions. This measure does not satisfy the constraint $\nu_Q \leq \sigma$, so one needs
to sweep the Dirac measure under the constraint measure $\sigma$. 
For $\alpha > -\frac12$ this will still give the measure with radial part $\nu^\sigma$.
For $-1 < \alpha < - \frac12$ we can replace the $(1+\alpha) \delta_1$ by part of the constraint $\sigma$, enough to add a mass of size $1+\alpha$ to
the density \eqref{Qdens}. This gives the measure $\nu_Q^\sigma$ with density
\begin{equation}  \label{Qdensig}    
   (\nu_Q^\sigma)'(r) =  \begin{cases}   \ds \frac{-1}{2\log q} \frac1r, &  r \in [q^{-2\alpha},q^{2(\alpha+1)}], \\[10pt]
                     \ds   \frac{-1}{\log q} \frac{1}{r}, & r \in [q^{2(\alpha+1)},1].
       \end{cases}  
\end{equation}
Indeed, the logarithmic potential of this measure is $U(z;\mu_Q^\sigma) = U(z;\mu_1) + U(z;\mu_2)$ where $\mu_1$ and $\mu_2$ are
circular symmetric measures with absolutely continuous radial parts $\nu_1$ and $\nu_2$ given by
\[   \nu_1'(r) =  \frac{-1}{2\log q} \frac1r, \qquad  r \in [q^{-2\alpha},1], \]
and
\[   \nu_2'(r) = \frac{-1}{2\log q} \frac1r,  \qquad  r \in [q^{2(\alpha+1)},1].  \]
One has
\[   U(z;\mu_1) = \begin{cases} -\alpha^2 \log q, & |z| < q^{-2\alpha}, \\[6pt]
                     \ds          \frac{(\log |z| )^2}{4 \log q} + \alpha \log |z|, & q^{-2\alpha} \leq |z| \leq 1, \\[10pt]
                                \alpha \log |z|, & |z| > 1, 
                  \end{cases}  \]
and
\[   U(z;\mu_2) = \begin{cases}  -(\alpha+1)^2 \log q, & |z| < q^{2(\alpha+1)}, \\[6pt]
                        \ds        \frac{(\log |z| )^2}{4\log q} - (\alpha+1) \log |z|, & q^{2(\alpha+1)} \leq |z| \leq 1, \\[10pt]
                                 -(\alpha+1) \log |z|, & |z| > 1, 
                  \end{cases}  \]
so that
\[   U(z;\mu) + Q(z) = \begin{cases}
               \ds    -(2\alpha^2+2\alpha+1) \log q - \frac{(\log |z| )^2}{4 \log q} - \alpha \log |z|, & |z| < q^{-2\alpha}, \\
                   -(\alpha+1)^2 \log q, & q^{-2\alpha} \leq |z| \leq q^{2(\alpha+1)}, \\[6pt]
               \ds    \frac{(\log |z|)^2}{4 \log q} - (\alpha+1) \log |z|, & q^{2(\alpha+1)} < |z| \leq 1.
                      \end{cases}  \]
This function is decreasing on $(0,q^{-2\alpha}]$, constant on $[q^{-2\alpha},q^{2(\alpha+1)}]$, and again decreasing on $[q^{2(\alpha+1)},1]$,
see Fig.~\ref{fig:4} on the right. Hence the variational inequalities \eqref{Qvar1}--\eqref{Qvar2} hold and the equilibrium constant $w_Q^\sigma$ is given by
$-(\alpha+1)^2 \log q$. 

\begin{figure}[ht]
\centering
\includegraphics[width=2.5in]{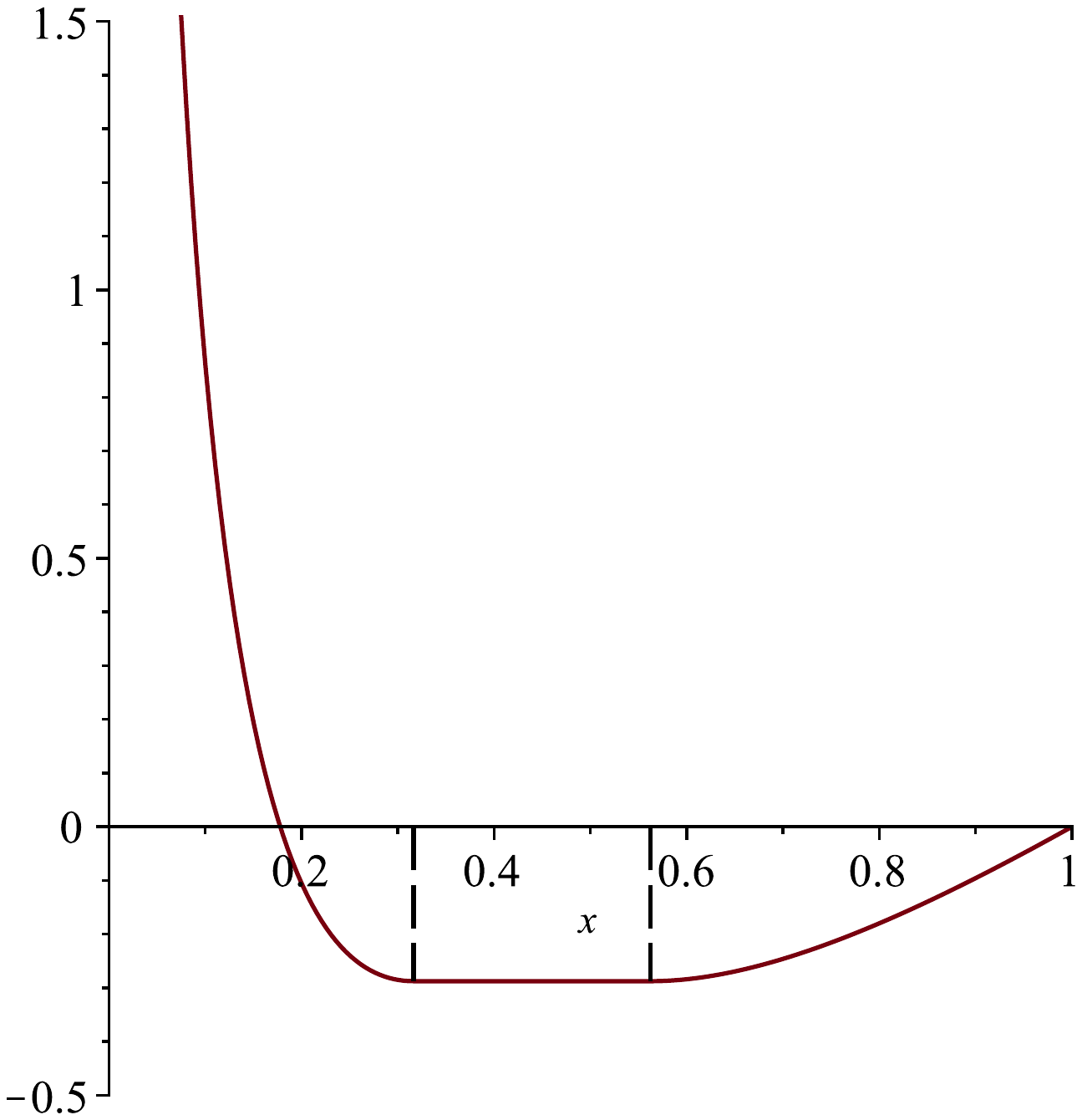}
\quad
\includegraphics[width=2.5in]{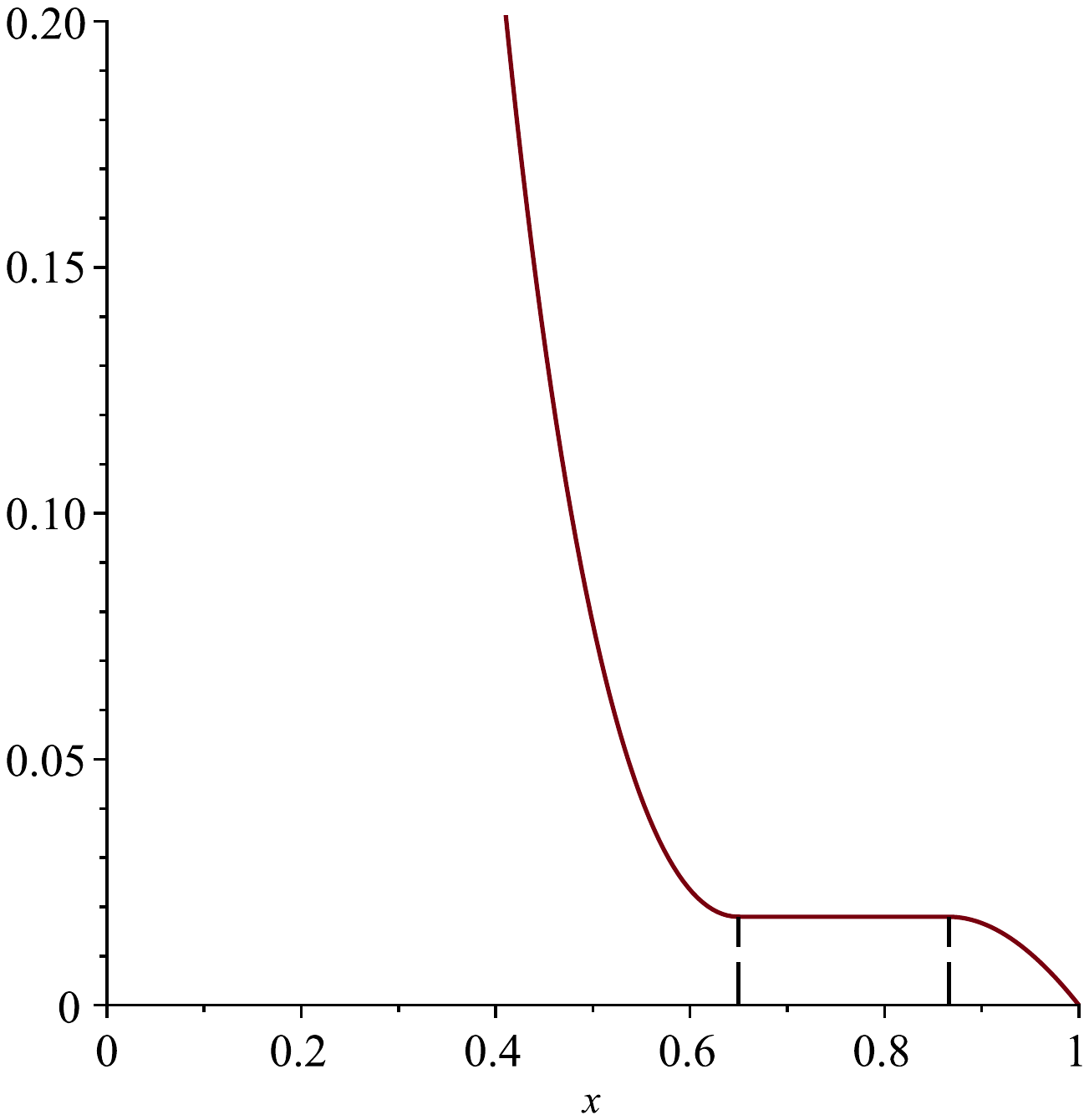}
\caption{The function $U(z;\mu_Q^\sigma)+Q(z)$ for $q=3/4$, $\alpha=-2$ (left) and $\alpha=-3/4$ (right).}
\label{fig:4}
\end{figure}

Summarizing, we thus have 

\begin{theorem} \label{thm:6.1}
For the $q$-Bessel polynomials with $a=q^{2n\alpha}$ one has
\[  \lim_{n \to \infty} \gamma_n^{1/n^2} = \begin{cases} q^{-\frac34 - \alpha}, & \textup{ if } \alpha \geq -\frac12, \\
                                                         q^{-(\alpha+1)^2}, & \textup{ if } -1 < \alpha < - \frac12, \\
                                                         q^{(\alpha+1)^2}, & \textup{ if } \alpha \leq -1.
                                           \end{cases}  \]
The asymptotic distribution of the zeros $x_{1,n}^{1/n},\ldots,x_{n,n}^{1/n}$ is given by
\[   \lim_{n \to \infty} \frac{1}{n} \sum_{k=1}^n f(x_{k,n}^{1/n}) = \int f(r) v(r)\, dr, \]
for every continuous function $f$ on $[0,1]$, where $v(r) = (\nu_Q^\sigma)'$ is given by
\[     v(r) = \begin{cases}  \ds \frac{-1}{\log q} \frac{1}{r} \chi_{[q,1]}, & \textup{ if } \alpha \geq - \frac12, \\[10pt]
                             \ds \frac{-1}{2\log q} \frac1r \chi_{[q^{-2\alpha},1]}  - \frac{1}{2\log q} \frac1r \chi_{[q^{2(\alpha+1)},1]}, &
                             \textup{ if } -1 < \alpha < -\frac12, \\[10pt]
                             \ds \frac{-1}{2\log q} \frac{1}{r} \chi_{[q^{-2\alpha},q^{-2(\alpha+1)}]}, & \textup{ if } \alpha \leq -1. 
                \end{cases}  \]
\end{theorem}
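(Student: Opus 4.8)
The plan is to reduce the entire statement to a single application of Theorem \ref{thm:2.2}: that theorem asserts $\gamma_n^{1/n^2}\to e^{w_Q^\sigma}$ and that the scaled zeros distribute according to the radial part $\nu_Q^\sigma$ of the constrained equilibrium measure, so the three cases of Theorem \ref{thm:6.1} are nothing but a determination, for each $\alpha$, of the pair $(w_Q^\sigma,\nu_Q^\sigma)$ for the external field $Q$ in \eqref{eq:Qex} together with the constraint $\sigma$. First I would verify that Theorem \ref{thm:2.2} is applicable. Since $Q$ is continuous on $(0,1]$ and $Q(x)\to+\infty$ as $x\to0$ (the $-(\log x)^2/(4\log q)$ term dominates), $Q$ is an admissible field; moreover $rQ'(r)=-\log r/(2\log q)-\alpha\to-\infty$ as $r\to0$, so $Q$ is decreasing on some $(0,\delta]$, which is exactly the hypothesis of Theorem \ref{thm:2.2} and, by the argument opening Section \ref{sec:5}, guarantees $0\notin\textup{supp}(\mu_Q^\sigma)$ in every regime.

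Next I would classify the regimes by the position of $r_0=q^{-2\alpha}$ and $R_0=q^{-2(1+\alpha)}$ relative to $1$, using the adapted version of Theorem \ref{thm:ST} (legitimate because $rQ'(r)$ is increasing). For $\alpha\le-1$ one has $R_0\le1$, so the unconstrained $\mu_Q$ lives on the annulus $q^{-2\alpha}\le|z|\le q^{-2(1+\alpha)}$ with density $(rQ'(r))'=-1/(2r\log q)$; this already lies under $\sigma'$, hence $\mu_Q^\sigma=\mu_Q$ and, from the potential computed above, $w_Q^\sigma=(\alpha+1)^2\log q$. For $-1<\alpha<0$ one has $R_0>1$, so $\mu_Q$ acquires a Dirac mass $(1+\alpha)\delta_1$ violating the constraint, which must be swept under $\sigma$. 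The free room beneath $\sigma'$ on $[q^{-2\alpha},1]$ equals $\int_{q^{-2\alpha}}^1\!\frac{-1}{2r\log q}\,dr=-\alpha$, so the sweep remains inside $[q^{-2\alpha},1]$ exactly when $1+\alpha\le-\alpha$, i.e.\ $\alpha\le-\tfrac12$, producing the two-piece density \eqref{Qdensig} with $w_Q^\sigma=-(\alpha+1)^2\log q$; for $\alpha\ge-\tfrac12$ the sweep saturates the constraint all the way down to $q$, giving $\nu^\sigma$.

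The step I expect to be the main obstacle is the verification of the variational inequalities \eqref{Qvar1}--\eqref{Qvar2} in this saturated regime $\alpha\ge-\tfrac12$. Here $\nu_Q^\sigma=\nu^\sigma$ coincides with the constraint on its whole support $[q,1]$, so $U(\cdot;\mu^\sigma)+Q$ need not be constant there and $w_Q^\sigma$ cannot simply be read off an equality; instead I would write $U(z;\mu^\sigma)+Q(|z|)$ as a quadratic in $u=\log|z|$ separately on $[0,q]$ and on $[q,1]$ and track the sign of its derivative. The critical point of this quadratic sits at $|z|=q^{2(1+\alpha)}$ on the support and at $|z|=q^{-2\alpha}$ on the complement; the concavity on $[q,1]$ and convexity on $[0,q]$ (both governed by the sign of $1/\log q$) then force \eqref{Qvar2} on $[q,1]$ and \eqref{Qvar1} on $[0,q]$ precisely because these critical points fall outside the relevant intervals exactly when $\alpha\ge-\tfrac12$ — which is the structural reason this value separates the first two cases. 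Evaluating the common boundary value at $|z|=q$ yields $w_Q^\sigma=(-\tfrac34-\alpha)\log q$.

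Finally, feeding the three pairs $(\nu_Q^\sigma,w_Q^\sigma)$ into Theorem \ref{thm:2.2} delivers both claims simultaneously: $\gamma_n^{1/n^2}\to e^{w_Q^\sigma}$ gives the stated $q^{-3/4-\alpha}$, $q^{-(\alpha+1)^2}$, $q^{(\alpha+1)^2}$ in the three ranges, and the weak limit of the root-counting measures is the density $v(r)=(\nu_Q^\sigma)'$ listed in the theorem. As an independent cross-check I would confirm these limits against the explicit formula for $\gamma_n$ recorded at the start of the subsection, as was done in the previous examples.
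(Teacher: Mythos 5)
Your proposal is correct and follows essentially the same route as the paper: reduce everything via Theorem \ref{thm:2.2} to identifying the pair $(\nu_Q^\sigma, w_Q^\sigma)$ for the field \eqref{eq:Qex}, apply the adapted Theorem \ref{thm:ST} on the annulus $[q^{-2\alpha},q^{-2(\alpha+1)}]$ when $\alpha\le-1$, sweep the Dirac mass $(1+\alpha)\delta_1$ under the constraint to get \eqref{Qdensig} when $-1<\alpha<-\frac12$, and obtain $\nu^\sigma$ with $w_Q^\sigma=(-\frac34-\alpha)\log q$ when $\alpha\ge-\frac12$. The one point where you go beyond the paper is worth noting: your explicit verification of \eqref{Qvar1}--\eqref{Qvar2} through the concave/convex quadratics in $\log|z|$, with critical points at $q^{2(1+\alpha)}$ and $q^{-2\alpha}$ leaving the relevant intervals exactly at $\alpha=-\frac12$, supplies uniformly for all $\alpha\ge-\frac12$ a detail that the paper only derives from the ``decreasing $Q$'' case of Section \ref{sec:3.2} (valid for $\alpha\ge 0$) and otherwise merely asserts for $-\frac12\le\alpha<0$.
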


We have plotted the density $(\nu_Q^\nu)'$ in Fig. \ref{fig:5} for $\alpha = -2$ (left) and $\alpha=-3/4$ (right). For $\alpha \geq -1/2$
the density is given in Fig. \ref{fig:2}.

\begin{figure}[ht]
\centering
\includegraphics[width=2.5in]{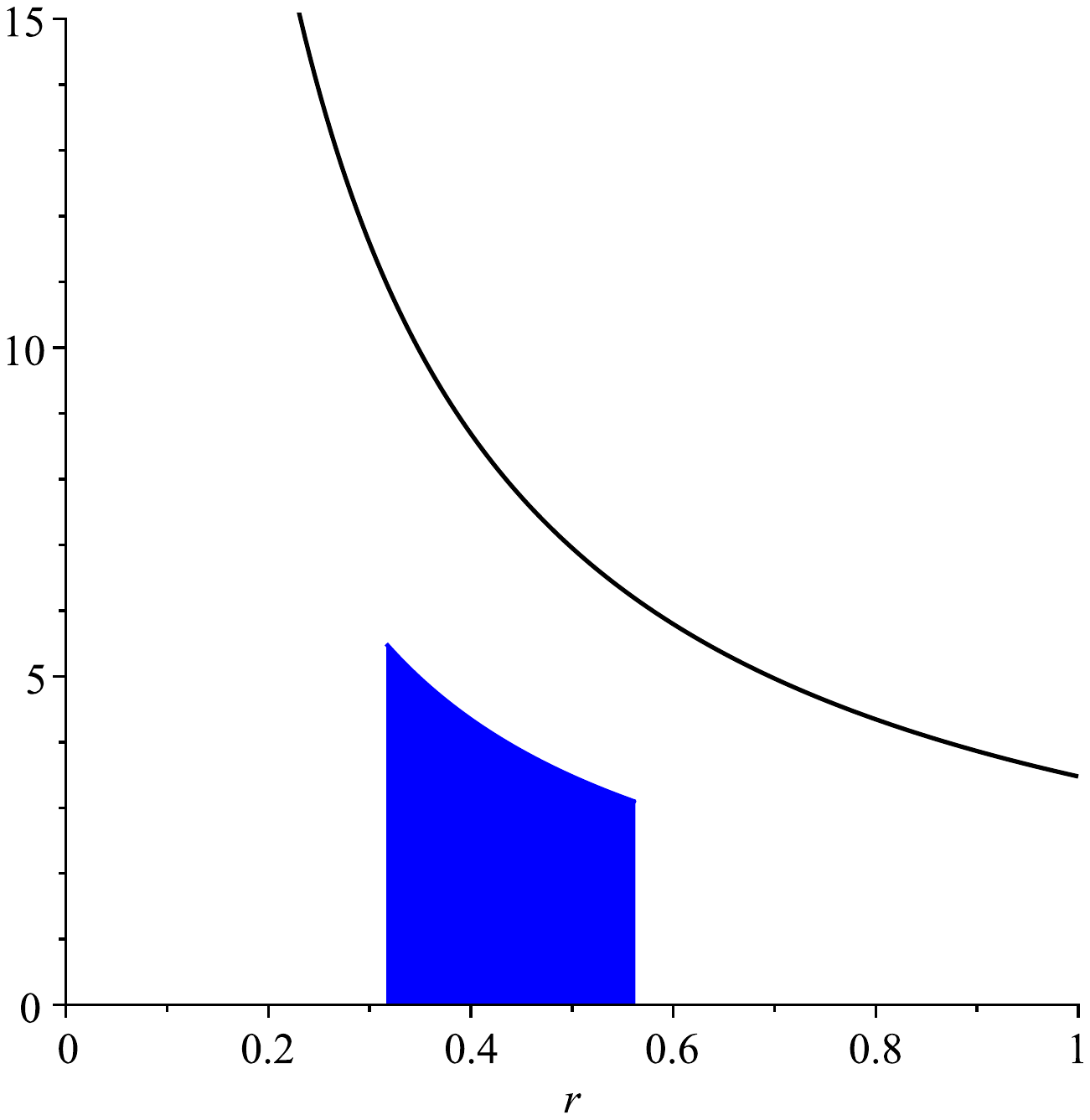}
\quad
\includegraphics[width=2.5in]{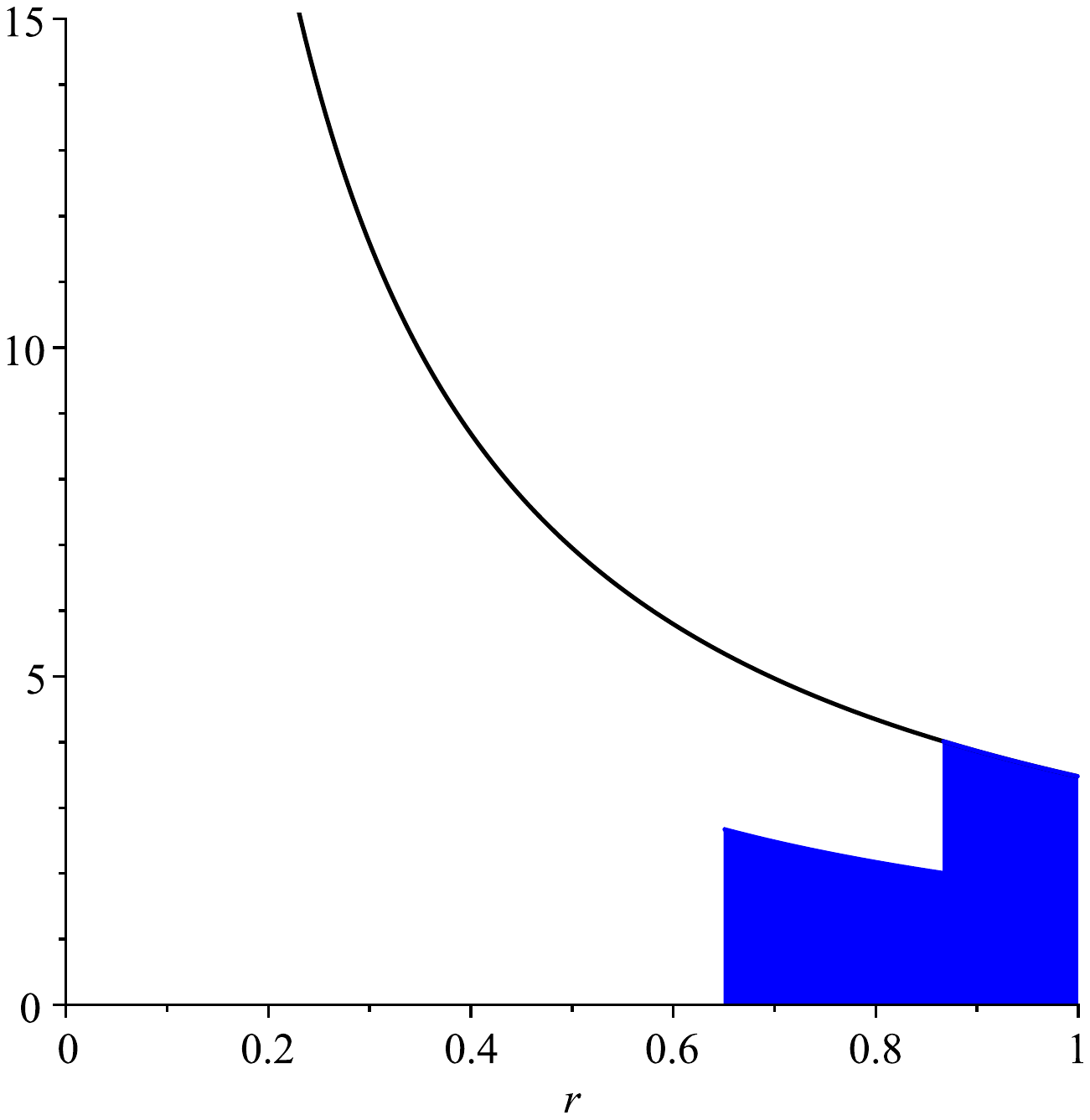}
\caption{The density of the radial part of the constrained equilibrium measure  for $q=3/4$, $\alpha=-2$ (left), and $\alpha=-3/4$ (right).
The black curve is the constraint.}
\label{fig:5}
\end{figure}
  
\subsection{Little $q$-Jacobi polynomials}
Little $q$-Jacobi polynomials are given by \cite[\S 14.12]{KLS}
\[   p_n(x;a,b|q) = {}_2\phi_1 \left( \left. \begin{array}{c} q^{-n}, abq^{n+1} \\ aq \end{array} \right| q, qx \right), \]
where $0 < aq < 1$ and  $bq <1$. Observe that the little $q$-Laguerre polynomials can be obtained from the little $q$-Jacobi polynomials by taking $b=0$
and the $q$-Bessel polynomials can be obtained from the little $q$-Jacobi polynomials by
\[    \lim_{a \to 0} p_n(x;a,-b/aq|q) = y_n(x;b;q).  \]
The orthogonality for the little $q$-Jacobi polynomials is
\[    \sum_{k=0}^\infty \frac{(bq;q)_k}{(q;q)_k} (aq)^k p_m(q^k;a,b|q) p_n(q^k;a,b|q) =
   \frac{(abq^2;q)_\infty (1-abq)(aq)^n(q,bq;q)_n}{(aq;q)_\infty (1-abq^{2n+1}) (ab,abq;q)_n} \delta_{m,n}, \]
so that
\[    w_k = \frac{(bq;q)_k}{(q;q)_k} a^k = w(q^k;a,b), \qquad  w(x;a,b) = \frac{(qx;q)_\infty}{(bqx;q)_\infty} x^\alpha, \]
where $q^\alpha = a$. For this weight we have
\[   \lim_{n \to \infty} \frac{1}{n^2} \log w(x^n;a,b) = 0, \qquad x \in (0,1), \]
so that Theorem \ref{thm:2.1} holds if we keep $\alpha$ and $\beta$ fixed.
In particular one has
\[  \lim_{n \to \infty} \gamma_n^{1/n^2} = q^{-1/2}. \]
This can easily be verified by using the explicit expression
\[  \gamma_n(a,b) = q^{-\frac{n(n-1)}{2}} \frac{(abq;q)_{2n}}{\sqrt{(abq;q)_n (aq;q)_n}} 
     \sqrt{ \frac{(aq;q)_\infty (1-abq^{2n+1})}{(abq^2;q)_\infty (1-abq)(aq)^n (q,q)_n (bq;q)_n}} . \]
The asymptotic distribution of the points $x_{1,n}^{1/n},\ldots,x_{n,n}^{1/n}$ is then given by the measure
with density (see Fig. \ref{fig:2}) 
\[   (\nu^\sigma)'(t) = \frac{-1}{\log q} \frac{1}{t}, \qquad t \in [q,1].  \]
This means that these zeros are dense on $[q,1]$ and the constraint $\sigma$ holds on the full support of 
this equilibrium measure.
 
We can let the parameters $a$ and $b$ depend on $n$. Recall that $0 < aq < 1$ and $bq <1$, so that putting $a = q^{2n\alpha}$ and $b=q^{2n\beta}$ only allows $\alpha \geq 0$ and $\beta \geq 0$. In that
case nothing new happens since $\frac{1}{n^2} \log (bqx^n;q)_\infty \to 0$ for $0< x \leq 1$, so that the parameter $b$ does not appear in the external field $Q(x) = -\alpha \log x$. So, as was the case for the little $q$-Laguerre polynomials, one has
\[  \lim_{n \to \infty} \gamma_n^{1/n^2} = q^{-1/2 - \alpha}, \]
and the asymptotic distribution of the points $x_{1,n}^{1/n},\ldots,x_{n,n}^{1/n}$ is given by the measure $\nu^\sigma$.
A more interesting situation is to take $b = -q^{2n\beta}$. Again $\beta \geq 0$ does not influence the external field $Q$, but for $\beta < 0$ 
the $\beta$ will be present in the external field. For this we use the following lemma.

\begin{lemma}  \label{lem:6.2}
Let $0 < q < 1$, then
\begin{equation}  \label{limb}
  \lim_{n \to \infty} \frac{1}{n^2} \log (-q^{nc};q)_\infty = 
                       \begin{cases} 0, & \textrm{if $c \geq 0$}, \\
                    \ds -\frac{c^2}2 \log q, & \textrm{if $c < 0$}.
                       \end{cases}
\end{equation}             
\end{lemma}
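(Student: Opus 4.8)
The plan is to expand the $q$-shifted factorial as an infinite product and pass to logarithms, so that everything reduces to estimating a single series. Writing $(-q^{nc};q)_\infty = \prod_{j=0}^\infty \bigl(1+q^{nc+j}\bigr)$, I set
\[
  S_n := \log(-q^{nc};q)_\infty = \sum_{j=0}^\infty \log\bigl(1 + q^{nc+j}\bigr),
\]
and I abbreviate $f(s)=\log(1+q^s)$, which is positive and strictly decreasing because $\log q<0$. Throughout I will use only the elementary inequality $0 \le \log(1+x) \le x$ for $x\ge 0$ together with the summation of geometric series. The two cases of the lemma correspond to whether all the exponents $nc+j$ stay nonnegative or whether they change sign.

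For $c \ge 0$ the argument is immediate: every exponent $nc+j$ is nonnegative, so $q^{nc+j}\le 1$ and $0 \le S_n \le \sum_{j=0}^\infty q^{nc+j} = q^{nc}/(1-q) \le 1/(1-q)$. Thus $S_n$ is bounded independently of $n$, and dividing by $n^2$ gives $S_n/n^2 \to 0$, which is the claimed value.

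For $c<0$ the exponent $nc+j$ is negative for small $j$ and nonnegative for large $j$, so I would split the sum at the crossing index. Put $J=\lceil -nc\rceil$, so that $nc+j \ge 0$ exactly when $j\ge J$, and write $S_n = A_n + B_n$ with $A_n=\sum_{j=0}^{J-1}f(nc+j)$ and $B_n=\sum_{j\ge J}f(nc+j)$. The tail $B_n$ has nonnegative exponents and is bounded by $q^{nc+J}/(1-q)\le 1/(1-q)=O(1)$, exactly as in the first case. For the head $A_n$ the exponents are negative, and here I rewrite each term via the identity $\log(1+q^m)=m\log q+\log(1+q^{-m})$. The correction $\sum_{j=0}^{J-1}\log\bigl(1+q^{-(nc+j)}\bigr)$ runs over the positive exponents $-(nc+j)$, which form an increasing arithmetic progression with step $1$ starting at some $\delta\in(0,1]$; by geometric decay this sum is again bounded by $q^{\delta}/(1-q)=O(1)$. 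What remains is the genuinely large main term $\log q\,\sum_{j=0}^{J-1}(nc+j)$, and evaluating this arithmetic sum with $J=-nc+O(1)$ yields $\sum_{j=0}^{J-1}(nc+j) = -\tfrac12 n^2c^2 + O(n)$.

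Assembling the pieces gives $S_n = -\tfrac12 n^2 c^2 \log q + O(n)$, so $S_n/n^2 \to -\tfrac{c^2}{2}\log q$, as required. The only subtle point is the bookkeeping in the case $c<0$: one must verify that both the tail $B_n$ and the correction sum contribute only $O(n)$ (in fact $O(1)$), so that the entire quadratic leading behavior is produced by the arithmetic sum of the negative exponents alone. I expect the estimate of the correction term to need the most care, since the number of summands in $A_n$ grows linearly in $n$, and only the geometric decay of $q^{-(nc+j)}$ keeps their total bounded; once that is in hand, the computation of the leading coefficient is a routine evaluation of an arithmetic progression.
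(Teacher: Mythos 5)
Your proof is correct and takes essentially the same route as the paper's: both split at the index where the exponent $nc+j$ changes sign (your $J=\lceil -nc\rceil$ versus the paper's $m=\lfloor -nc\rfloor$), extract the quadratic main term from the arithmetic sum $\sum_{j}(nc+j)$ — the paper does this multiplicatively by factoring $q^{mnc+m(m-1)/2}$ out of the finite product — and check that all remaining factors stay bounded. Your version merely makes explicit, via $\log(1+x)\le x$ and geometric series, the $O(1)$ bounds that the paper states without detail.
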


\begin{proof}
For $c >0$ we have
\[      \lim_{n \to \infty} (-q^{nc};q)_\infty = (0;q)_\infty = 1, \]
and for $c=0$ we have that $(-1;q)_\infty$ does not depend on $n$. This settles the lemma for $c \geq 0$.
For $c < 0$ we take $m=\lfloor -nc \rfloor$ and write $(-q^{nc};q)_\infty = (-q^{nc};q)_m (-q^{nc+m};q)_\infty$. Since $-nc \leq m < -nc+1$ we see that
$nc+m \geq 0$, so that the factor $(-q^{nc+m};q)_infty$ will not influence the limit in \eqref{limb}. Hence we need to work out
\[  \lim_{n \to \infty} \frac{1}{n^2} \log (-q^{nc};q)_m.  \]
Note that
\begin{eqnarray*}
     (-q^{nc};q)_m &=& (1+q^{nc})(1+q^{nc+1})(1+q^{nc+2}) \cdots (1+q^{nc+m-1}) \\
                   &=&  q^{mnc +1+2+\cdots+m-1} (1+q^{-nc})(1+q^{-nc-1})(1+q^{-nc-2}) \cdots (1+q^{-nc-m+1}) \\
                   &=&  q^{mnc + m(m-1)/2} (-q^{-nc-m+1})_m, 
\end{eqnarray*}
and since $-nc-m+1 >0$ we see that
\[    \lim_{n \to \infty} \frac{1}{n^2} \log (-q^{nc};q)_m = \lim_{n \to \infty} \frac{mnc+m(m-1)/2}{n^2} \log q  \]
and since $m/n \to -c$ as $n \to \infty$, the limit in \eqref{limb} follows easily.
\end{proof}

Using this lemma, we see that for $a=q^{2n\alpha}$ $(\alpha \geq 0)$ and $b = -q^{2n\beta}$
\[  \lim_{n \to \infty} - \frac{1}{n^2} \log w(x^n;q^{2n\alpha},-q^{2n\beta})
        = -2\alpha \log x + \begin{cases} 0, & \textrm{if $x \leq q^{-2\beta}$}, \\
                                   \ds  -\frac12 \left( \frac{\log x}{\log q} + 2 \beta \right)^2 \log q, & \textrm{if $x > q^{-2\beta}$}.
                      \end{cases} \]
so that the external field is
\[    Q(x) =  -\alpha \log x + \begin{cases} 0, & \textrm{if $x \leq q^{-2\beta}$}, \\
                                         -\frac14 \left( \frac{\log x}{\log q} + 2 \beta \right)^2 \log q, & \textrm{if $x > q^{-2\beta}$}.
                               \end{cases}.  \]
For this external field one has
\[   xQ'(x) = \begin{cases}   - \alpha, & \textrm{if $x \leq q^{-2\beta}$}, \\
                              -\alpha-\beta - \frac12 \frac{\log x}{\log q}, & \textrm{if $q^{-2\beta}<x\leq 1$}.
              \end{cases}   \]
Observe that $xQ'(x) > 0$ whenever $x > q^{-2\alpha+\beta}$ and $xQ'(x)=1$ when $x=q^{-2(\alpha+\beta+1)}$, provided $q^{-2\alpha+\beta} \leq 1$ and
$q^{-2(\alpha+\beta+1)} \leq 1$. So in a similar way as for the $q$-Bessel polynomials, we find that the constrained equilibrium measure
has a radial part with density
\[   v(r) = \begin{cases}  \ds \frac{-1}{\log q} \frac{1}{r} \chi_{[q,1]}, & \textup{ if } \alpha+\beta \geq - \frac12, \\[10pt]
                           \ds \frac{-1}{2\log q} \frac1r \chi_{[q^{-2(\alpha+\beta)},1]}  - \frac{1}{2\log q} \frac1r \chi_{[q^{2(\alpha+\beta+1)},1]}, 
                                & \textup{ if } -1 < \alpha+\beta < -\frac12, \\[10pt]
                           \ds \frac{-1}{2\log q} \frac{1}{r} \chi_{[q^{-2(\alpha+\beta)},q^{-2(\alpha+\beta+1)}]}, & \textup{ if } \alpha+\beta \leq -1. 
                \end{cases}  \]
These densities are the same as those in Fig.~\ref{fig:2} and Fig.~\ref{fig:5} except that $\alpha$ needs to be replaced by $\alpha+\beta$.
Roughly speaking, the parameter $q=q^{2n\alpha}$ (with $\alpha >0$) pushes the zeros to the right, whereas the parameter $b=-q^{2n\beta}$ (with $\beta < 0$) pushes the zeros to the left. Sometimes (when $\alpha+\beta \geq -1/2$) the pushing is not observed since the constraint doesn't give enough room to push.

\begin{verbatim}
Walter Van Assche
Quinten Van Baelen
Department of Mathematics
KU Leuven
Celestijnenlaan 200B box 2400
BE-3001 Leuven
walter.vanassche@kuleuven.be
\end{verbatim}


\begin{thebibliography}{99}
\bibitem{AlBuDeh} R. \'Alvarez-Nodarse, E. Buend\'\i a, J.S. Dehesa,
\textit{The distribution of zeros of general $q$-polynomials},
J. Phys. A: Math. Gen. \textbf{30} (1997), 6743--6768.
\bibitem{DragSaff1} P.D. Dragnev, E.B. Saff, 
\textit{Constrained energy problems with applications to orthogonal polynomials of a discrete variable}, 
J. Anal. Math.  \textbf{72}  (1997), 223--259.
\bibitem{DragSaff2} P.D. Dragnev, E.B. Saff, 
\textit{A problem in potential theory and zero asymptotics of Krawtchouk polynomials}, 
J. Approx. Theory  \textbf{102}  (2000),  no. 1, 120--140.
\bibitem{Freud} G. Freud,
\textit{Orthogonal Polynomials},
Akad\'emiai Kiad\'o, Budapest, 1971.
\bibitem{KLS} R. Koekoek, P.A. Lesky, R.F. Swarttouw,
\textit{Hypergeometric Orthogonal Polynomials and Their $q$-Analogues},
Springer Monographs in Mathematics, Springer-Verlag, Berlin, 2010. 
\bibitem{Kuijl} A.B.J. Kuijlaars, 
\textit{Contracted zero distributions of extremal polynomials associated with slowly decaying weights}, 
Constr. Approx.  \textbf{16}  (2000),  no. 4, 559--587. 
\bibitem{KuijlRakh} A.B.J. Kuijlaars, E.A. Rakhmanov,
\textit{Zero distributions for discrete orthogonal polynomials},
J. Comput. Appl. Math. \textbf{99} (1998), 255--264;
Corrigendum in J. Comput. Appl. Math.  \textbf{104}  (1999),  no. 2, 213. 
\bibitem{KuijlWVA} A.B.J. Kuijlaars, W. Van Assche, 
\textit{Extremal polynomials on discrete sets}, 
Proc. London Math. Soc. (3)  \textbf{79}  (1999),  no. 1, 191--221.
\bibitem{NIST} F.W. Olver, D.W. Lozier, R.F. Boisvert, C.W. Clark (eds.),
\textit{NIST Handbook of Mathematical Functions},
Cambridge University Press, 2010.
\bibitem{Rakh} E.A. Rakhmanov, 
\textit{Uniform measure and distribution of zeros of extremal polynomials of a discrete variable}, 
Mat. Sb.  \textbf{187}  (1996),  no. 8, 109--124 (in Russian);  
translation in  Sb. Math.  \textbf{187}  (1996),  no. 8, 1213--1228.
\bibitem{SaffTotik} E.B. Saff, V. Totik,
\textit{Logarithmic Potentials with External Fields},
Grundlehren der mathematischen Wissenschaften \textbf{316},
Springer-Verlag, Berlin, 1997.
\bibitem{Szego} G. Szeg\H{o},
\textit{Orthogonal Polynomials},
Amer. Math. Soc. Colloq. Publ. \textbf{23}, Amer. math. Soc., Providence RI, 1939 (fourth edition, 1975).
\end{thebibliography}
\end{document}